\numberwithin{equation}{section}
\numberwithin{figure}{section}
\theoremstyle{plain}
\newtheorem{thm}{\protect\theoremname}
\theoremstyle{definition}
\newtheorem{defn}[thm]{\protect\definitionname}
\theoremstyle{plain}
\newtheorem{cor}[thm]{\protect\corollaryname}
\theoremstyle{plain}
\newtheorem{conjecture}[thm]{\protect\conjecturename}
\definecolor{lightgray}{rgb}{0.9,0.9,0.9}
\definecolor{lightred}{rgb}{1,0.8,0.8}
\definecolor{lightgreen}{rgb}{0.6,1,0.6}
\definecolor{lightyellow}{rgb}{1,1,0.5}
\definecolor{lightgrey}{rgb}{0.8,0.8,0.8}
\providecommand{\conjecturename}{Conjecture}
\providecommand{\corollaryname}{Corollary}
\providecommand{\definitionname}{Definition}
\providecommand{\theoremname}{Theorem}
\begin{document}
\title{Pairwise Near-maximal Grand Coupling of Brownian Motions}
\author{Cheuk Ting Li and Venkat Anantharam\\
EECS, UC Berkeley, Berkeley, CA, USA\\
Email: ctli@berkeley.edu, ananth@eecs.berkeley.edu}
\maketitle
\begin{abstract}
The well-known reflection coupling gives a maximal coupling of two
one-dimensional Brownian motions with different starting points. Nevertheless,
the reflection coupling does not generalize to more than two Brownian
motions. In this paper, we construct a coupling of all Brownian motions
with all possible starting points (i.e., a grand coupling), such that
the coupling for any pair of the coupled processes is close to being
maximal, that is, the distribution of the coupling time of the pair
approaches that of the maximal coupling as the time tends to $0$
or $\infty$, and the coupling time of the pair is always within a
multiplicative factor $2e^{2}$ from the maximal one. We also show
that a grand coupling that is pairwise exactly maximal does not exist.
\end{abstract}

\section{Introduction}

The maximal coupling of two stochastic processes $P,Q$ is a coupling
$(\{X_{t}\}_{t},\{Y_{t}\}_{t})$ (i.e., the marginal distribution
of $\{X_{t}\}_{t}$ is $P$, and that of $\{Y_{t}\}_{t}$ is $Q$)
that simultaneously maximizes the probability that the processes match
after time $s$ (i.e., $X_{t}=Y_{t}$ for all $t\ge s$) for all $s$.
It was studied by Griffeath \cite{griffeath1975maximal}, Goldstein
\cite{goldstein1979maximal} and Pitman \cite{pitman1976coupling}.
For two one-dimensional Brownian motions with different starting points,
a maximal coupling can be given by the reflection coupling studied
by Lindvall \cite{lindvall1982coupling}, Lindvall and Rogers \cite{lindvall1986coupling},
Hsu and Sturm \cite{hsu2013maximal}, and Kendall \cite{kendall2015coupling}.
Also see \cite{arous1995coupling,kendall2004coupling,kendall2007coupling}
for results on coupling functionals of Brownian motions.

While the maximal coupling of two stochastic processes exists under
rather general conditions \cite{goldstein1979maximal,thorisson1986maximal},
it might not exist in the pairwise sense for more than two processes,
that is, given a collection of stochastic processes $\{P_{\alpha}\}_{\alpha\in\mathcal{A}}$,
there may not exist a coupling $\{\{X_{\alpha,t}\}_{t}\}_{\alpha\in\mathcal{A}}$
(i.e., the marginal distribution of $\{X_{\alpha,t}\}_{t}$ is $P_{\alpha}$)
that simultaneously maximizes $\mathbf{P}(\forall t\ge s:\,X_{\alpha,t}=X_{\beta,t})$
for all $s$, $\alpha$, $\beta$. The maximal coalescent coupling,
which maximizes the probability that all processes in the collection
match after time $s$ (i.e., $\mathbf{P}(\forall\alpha,\beta\in\mathcal{A},\,t\ge s:\,X_{\alpha,t}=X_{\beta,t})$),
was studied by Connor \cite{connor2007coupling}. Nevertheless, a
maximal coalescent coupling, which only concerns whether the processes
all agree after certain time, may not give a maximal (or close to
maximal) coupling when only the marginal distribution of a pair of
processes $\{X_{\alpha,t}\}_{t}$, $\{X_{\beta,t}\}_{t}$ is considered
(refer to Section \ref{sec:dyadic}). Other related works on the coupling
of more than two distributions or stochastic processes include coupling
from the past \cite{propp1996exact,propp1998coupling}, Wasserstein
barycenter \cite{agueh2011barycenters}, and multi-marginal optimal
transport \cite{kellerer1984duality,gangbo1998optimal,pass2011uniqueness,angel2019pairwise,li2019pairwise}.
A coupling of Markov chains with the same Markov kernel and all possible
initial states (i.e., $P_{\alpha}$ is the Markov chain starting at
$\alpha$ for any state $\alpha\in\mathcal{A}$) is often called a
grand coupling in the literature on coupling from the past and mixing
times of Markov chains (e.g. \cite{levin2010glauber}).

A classical example of a grand coupling of all one-dimensional Brownian
motions with all possible starting points (i.e., $P_{\alpha}=\mathrm{BM}(\alpha)$,
the Brownian motion starting at $\alpha\in\mathbb{R}$) is the Brownian
web studied by Arratia \cite{arratia1980coalescing} and T{\'o}th
and Werner \cite{toth1998true}.\footnote{The Brownian web is a coupling of all Brownian motions starting at
every two-dimensional point in space-time. In this paper, we only
consider starting points at time $0$.} The Brownian web has a property that, if we consider the marginal
joint distribution of the processes with distribution $\mathrm{BM}(\alpha)$
and $\mathrm{BM}(\beta)$ ($\alpha\neq\beta\in\mathbb{R}$), then
the processes move independently from $\alpha$ and $\beta$ respectively,
until they couple (become equal), and then move together (the same
as the Doeblin coupling for Markov chains \cite{doeblin1938expose},
which is generally not maximal). The distribution of the coupling
time between the two processes is the same as the distribution of
twice the coupling time of the reflection coupling, i.e., the Brownian
web has a multiplicative gap $2$ from the optimum (refer to Section
\ref{sec:dyadic}). The multiplicative gap does not vanish as the
time tends to $0$ or $\infty$.

In this paper, we give a grand coupling $\{\{X_{\alpha,t}\}_{t}\}_{\alpha\in\mathbb{R}}$
of all one-dimensional Brownian motions with all possible starting
points (i.e., $\{X_{\alpha,t}\}_{t}$ has marginal $P_{\alpha}=\mathrm{BM}(\alpha)$
for $\alpha\in\mathbb{R}$), called the \emph{dyadic grand coupling},
such that the coupling for any pair of the coupled processes is close
to being maximal. Let
\[
\Upsilon_{\alpha,\beta}:=\inf\big\{ s\ge0:\,X_{\gamma_{1},t}=X_{\gamma_{2},t},\,\forall\,\gamma_{1},\gamma_{2}\in[\alpha,\beta],t\ge s\big\}
\]
be the coupling time of all the Brownian motions with starting point
lying in the interval $[\alpha,\beta]$, and $\hat{\Upsilon}_{\alpha,\beta}:=\inf\{s\ge0:\,\hat{X}_{\alpha,t}=\hat{X}_{\beta,t},\,\forall\,t\ge s\}$
be the coupling time of the reflection coupling, where $\{\hat{X}_{\alpha,t}\}_{t\ge0},\{\hat{X}_{\beta,t}\}_{t\ge0}$
is the reflection coupling of $\mathrm{BM}(\alpha),\mathrm{BM}(\beta)$
(which is a maximal coupling). Let ``$\preceq$'' denote first-order
stochastic dominance between two real-valued random variables (i.e.,
$Y\preceq Z$ if $\mathbf{P}(Y\ge t)\le\mathbf{P}(Z\ge t)$ for all
$t\in\mathbb{R}$). Then the distribution of $\Upsilon_{\alpha,\beta}$
is close to that of the optimal $\hat{\Upsilon}_{\alpha,\beta}$ for
all $\alpha<\beta$, in the sense that $\Upsilon_{\alpha,\beta}\preceq2e^{2}\hat{\Upsilon}_{\alpha,\beta}$,
and the distribution of $\Upsilon_{\alpha,\beta}$ tends to that of
$\hat{\Upsilon}_{\alpha,\beta}$ as the time tends to $0$ or $\infty$
(in the sense of multiplicative gap). More precisely, there exists
a function $r:\mathbb{R}_{>0}\to[1,2e^{2}]$ (that does not depend
on $\alpha,\beta$) such that $\lim_{t\to0}r(t)=\lim_{t\to\infty}r(t)=1$,
and
\begin{equation}
\hat{\Upsilon}_{\alpha,\beta}\preceq\Upsilon_{\alpha,\beta}\preceq r\left(\frac{\hat{\Upsilon}_{\alpha,\beta}}{|\alpha-\beta|^{2}}\right)\hat{\Upsilon}_{\alpha,\beta}\label{eq:stoc_dom}
\end{equation}
for any $\alpha<\beta$. Numerical evidence shows that the maximum
multiplicative gap $2e^{2}$ can be improved to around $1.5$, and
the dyadic grand coupling has a strictly smaller coupling time than
the Brownian web in the sense of first-order stochastic dominance
(see Figure \ref{fig:bdr}). Refer to Section \ref{sec:dyadic} for
details.

A natural question is whether there exists a grand coupling $\{\{\tilde{X}_{\alpha,t}\}_{t}\}_{\alpha\in\mathbb{R}}$
of $\{\mathrm{BM}(\alpha)\}_{\alpha\in\mathbb{R}}$ such that any
pair $\{\tilde{X}_{\alpha,t}\}_{t},\{\tilde{X}_{\beta,t}\}_{t}$ is
a maximal coupling. In Section \ref{sec:nonexist}, we show that such
a pairwise maximal grand coupling does not exist. We conjecture that
the dyadic grand coupling is optimal, in the sense of attainable failure
probability bounds, as defined in Section \ref{sec:nonexist}.

\medskip{}

\[
\]

\section{Dyadic Grand Coupling of Brownian Motion\label{sec:dyadic}}

Let $\mathrm{BM}(\alpha)$ be the distribution of the standard one-dimensional
Brownian motion starting at $\alpha\in\mathbb{R}$ ($P_{\alpha}$
is a probability distribution over the space of continuous functions
$\mathrm{C}([0,\infty),\mathbb{R})$ with the topology of uniform
convergence over compact subsets of $[0,\infty)$). To couple $\mathrm{BM}(\alpha),\mathrm{BM}(\beta)$
with two different starting points $\alpha,\beta$, the reflection
coupling \cite{lindvall1982coupling,lindvall1986coupling} $\{\hat{X}_{\alpha,t}\}_{t\ge0},\{\hat{X}_{\beta,t}\}_{t\ge0}$
is given by $\{\hat{X}_{\alpha,t}\}_{t\ge0}\sim\mathrm{BM}(\alpha)$,
$T:=\inf\{t\ge0:\,\hat{X}_{\alpha,t}=(\alpha+\beta)/2\}$, $\hat{X}_{\beta,t}=\alpha+\beta-\hat{X}_{\alpha,t}$
for $t<T$, $\hat{X}_{\beta,t}=\hat{X}_{\alpha,t}$ for $t\ge T$.
The probability of failure of the reflection coupling can be given
by
\begin{equation}
\mathbf{P}\Big(\exists\,t\ge s\,\mathrm{s.t.}\,\hat{X}_{\alpha,t}\neq\hat{X}_{\beta,t}\Big)=\mathrm{erf}\left(\frac{|\alpha-\beta|}{2\sqrt{2s}}\right)\label{eq:pfail_reflect}
\end{equation}
for any $s>0$, where 
\begin{align*}
\mathrm{erf}(\gamma) & :=\int_{-\gamma}^{\gamma}\frac{e^{-x^{2}}}{\sqrt{\pi}}\mathrm{d}x
\end{align*}
is the error function.

Nevertheless, if we have to couple all the processes in $\{\mathrm{BM}(\alpha)\}_{\alpha\in\mathbb{R}}$,
it is impossible to simultaneously attain this probability of failure
for all pairs of starting points, as will be shown in Section \ref{sec:nonexist}.
The maximal coalescent coupling \cite{connor2007coupling} is not
useful in this setting since, for any fixed time, it is impossible
for all the processes in $\{\{X_{\alpha,t}\}_{t\ge0}\}_{\alpha\in\mathbb{R}}$
(where $\{X_{\alpha,t}\}_{t\ge0}\sim\mathrm{BM}(\alpha)$) to coalesce
(become all equal) by that time with a positive probability. If we
consider only the processes $\{\mathrm{BM}(\alpha)\}_{\alpha\in[\gamma_{1},\gamma_{2}]}$
with starting points in the interval $[\gamma_{1},\gamma_{2}]$, then
a maximal coalescent coupling can be given simply by performing the
reflection coupling between $\{X_{\gamma_{1},t}\}_{t}$ and $\{X_{\alpha,t}\}_{t}$
for all $\alpha\in(\gamma_{1},\gamma_{2}]$ (note that in the reflection
coupling, one process can be obtained deterministically from another,
and thus we can express $\{X_{\alpha,t}\}_{t}$ as a function of $\{X_{\gamma_{1},t}\}_{t}\sim\mathrm{BM}(\gamma_{1})$
for all $\alpha\in(\gamma_{1},\gamma_{2}]$). This coupling is undesirable
since the coupling time between $\{X_{\gamma_{2},t}\}_{t}$ and $\{X_{\gamma_{2}-\epsilon,t}\}_{t}$
is the same as that between $\{X_{\gamma_{2},t}\}_{t}$ and $\{X_{\gamma_{1},t}\}_{t}$,
despite $\mathrm{BM}(\gamma_{2})$ being much closer to $\mathrm{BM}(\gamma_{2}-\epsilon)$
than to $\mathrm{BM}(\gamma_{1})$.

The Brownian web \cite{arratia1980coalescing,toth1998true} $\{\{X_{\alpha,t}^{\mathrm{BW}}\}_{t\ge0}\}_{\alpha\in\mathbb{R}}$
(where $\{X_{\alpha,t}^{\mathrm{BW}}\}_{t\ge0}\sim\mathrm{BM}(\alpha)$)
gives a probability of failure
\begin{align*}
 & \mathbf{P}\Big(\exists\,t\ge s\,\mathrm{s.t.}\,X_{\alpha,t}^{\mathrm{BW}}\neq X_{\beta,t}^{\mathrm{BW}}\Big)\\
 & =\mathbf{P}\Big(\exists\,\gamma_{1},\gamma_{2}\in[\alpha,\beta],t\ge s\,\mathrm{s.t.}\,X_{\gamma_{1},t}^{\mathrm{BW}}\neq X_{\gamma_{2},t}^{\mathrm{BW}}\Big)\\
 & =\mathrm{erf}\left(\frac{|\alpha-\beta|}{2\sqrt{s}}\right),
\end{align*}
and hence the distribution of the coupling time between $\{X_{\alpha,t}^{\mathrm{BW}}\}_{t}$
and $\{X_{\beta,t}^{\mathrm{BW}}\}_{t}$ (the first time where $X_{\alpha,t}^{\mathrm{BW}}=X_{\beta,t}^{\mathrm{BW}}$)
is the same as the distribution of twice the coupling time of the
reflection coupling $\{\hat{X}_{\alpha,t}\}_{t},\{\hat{X}_{\beta,t}\}_{t}$.
The multiplicative gap $2$ does not vanish as the time tends to $0$
or $\infty$, that is, the Brownian web does not satisfy \eqref{eq:stoc_dom}.

In this section, we propose a coupling that achieves a probability
of failure close to that of the reflection coupling for all pairs
of starting points. We construct a coupling $\{\{X_{\alpha,t}\}_{t\ge0}\}_{\alpha\in\mathbb{R}}$
(where $\{X_{\alpha,t}\}_{t\ge0}\sim\mathrm{BM}(\alpha)$) as follows.
\begin{defn}
[Dyadic grand coupling of Brownian motion]\label{def:dyadic}Let
$\{Y_{t}\}_{t\ge0}\sim\mathrm{BES}^{3}(0)$, the Bessel process of
dimension $3$ starting at $0$. Let $W_{i}\stackrel{iid}{\sim}\mathrm{Unif}\{\pm1\}$
for $i\in\mathbb{Z}$ be independent of $\{Y_{t}\}_{t\ge0}$. For
any $\theta\in[0,1]$ and $\alpha\in\mathbb{R}$, let
\[
G_{\theta,\alpha,j}=G_{\theta,\alpha,j}(\{W_{i}\}_{i\le j}):=\begin{cases}
W_{j} & \mathrm{if}\;\left(\alpha-\sum_{k=-\infty}^{j-1}W_{k}2^{k+\theta-1}\!+2^{j+\theta-1}\right)\mathrm{mod}\;2^{j+\theta+1}\in[0,2^{j+\theta}),\\
-W_{j} & \mathrm{otherwise},
\end{cases}
\]
for $j\in\mathbb{Z}$, where $a\;\mathrm{mod}\;b:=a-b\lfloor a/b\rfloor$
for $b>0$. By the definition of $G_{\theta,\alpha,j}$, the conditional
distribution of $G_{\theta,\alpha,j}$ given any $\{W_{k}\}_{k<j}$
is $\mathrm{Unif}\{\pm1\}$ (since $W_{j}\sim\mathrm{Unif}\{\pm1\}$
independent of $\{W_{k}\}_{k<j}$). Hence $G_{\theta,\alpha,j}\stackrel{iid}{\sim}\mathrm{Unif}\{\pm1\}$,
and is independent of $\{Y_{t}\}_{t\ge0}$.

As will be shown in Appendix \ref{sec:pf_dyadic}, if $\sup\{j:W_{j}=1\}=\sup\{j:W_{j}=-1\}=\infty$
(which happens almost surely), then
\begin{equation}
\left\lfloor 2^{-(j+\theta)}\left(\alpha-\!\sum_{k=-\infty}^{j-1}W_{k}2^{k+\theta-1}\right)+\frac{1}{2}\right\rfloor =\sum_{k=j}^{\infty}(W_{k}-G_{\theta,\alpha,k})2^{k-j-1}\label{eq:dyadic_partial}
\end{equation}
for any $j\in\mathbb{Z}$,
\begin{equation}
\sum_{j=-\infty}^{\infty}(W_{j}-G_{\theta,\alpha,j})2^{j+\theta-1}=\alpha,\label{eq:dyadic_sum}
\end{equation}
and $G_{\theta,\alpha,j}=W_{j}$ for all sufficiently large $j$.
For $\theta\in[0,1]$ and $i\in\mathbb{Z}$, let
\[
T_{\theta,i}:=\inf\left\{ t\ge0:\,Y_{t}=2^{i+\theta}\right\} .
\]
Let $X_{\theta,\alpha,0}:=\alpha$. For $t>0$, with $i$ defined
to satisfy $T_{\theta,i-1}<t\le T_{\theta,i}$, let
\begin{align}
X_{\theta,\alpha,t} & :=\alpha+\sum_{j=-\infty}^{i-1}G_{\theta,\alpha,j}2^{j+\theta-1}+(Y_{t}-2^{i+\theta-1})G_{\theta,\alpha,i}\label{eq:dyadic_xdef_f}\\
 & =(Y_{t}-2^{i+\theta})G_{\theta,\alpha,i}+\sum_{j=-\infty}^{\infty}(W_{j}-\mathbf{1}\{j>i\}G_{\theta,\alpha,j})2^{j+\theta-1},\label{eq:dyadic_xdef_b}
\end{align}
where the equivalence of \eqref{eq:dyadic_xdef_f} and \eqref{eq:dyadic_xdef_b}
can be seen by \eqref{eq:dyadic_sum}. Let $\Theta\sim\mathrm{Unif}[0,1]$
be independent of $(\{Y_{t}\}_{t\ge0},\{W_{i}\}_{i})$, and let $X_{\alpha,t}:=X_{\Theta,\alpha,t}$.
\end{defn}

We now check that $\{X_{\alpha,t}\}_{t\ge0}\sim\mathrm{BM}(\alpha)$.
First, for any $i$, we can see from \eqref{eq:dyadic_xdef_f} that
$\lim_{t\searrow T_{\theta,i-1}}X_{\theta,\alpha,t}=X_{\theta,\alpha,T_{\theta,i-1}}$,
and hence $\{X_{\alpha,t}\}_{t\ge0}$ is continuous in $t$.

By the strong Markov property, $\{Y_{t+T_{\theta,i-1}}\}_{t\ge0}\sim\mathrm{BES}^{3}(2^{i+\theta-1})$.
Since $\mathrm{BES}^{3}(2^{i+\theta-1})$ is the distribution of $\mathrm{BM}(2^{i+\theta-1})$
conditioned to stay positive \cite{mckean1963excursions,knight1969brownian,williams1974path,pitman1975one},
we see that $\{Y_{t+T_{\theta,i-1}}\}_{0\le t\le T_{\theta,i}-T_{\theta,i-1}}$
has the same distribution as $\mathrm{BM}(2^{i+\theta-1})$ conditioned
to stay positive and stopped when it hits $2^{i+\theta}$, or equivalently,
stopped when it hits either $0$ or $2^{i+\theta}$ and conditioned
on the event that it hits $2^{i+\theta}$ (which has probability $1/2$,
so the conditioning is well-defined). By symmetry, $\{(Y_{t+T_{\theta,i-1}}-2^{i+\theta-1})G_{\theta,\alpha,i}\}_{0\le t\le T_{\theta,i}-T_{\theta,i-1}}$
has the same distribution as $\mathrm{BM}(0)$ stopped when it hits
either $2^{i+\theta-1}$ or $-2^{i+\theta-1}$, and is independent
of $(T_{\theta,i-1},\{Y_{t}\}_{t\le T_{\theta,i-1}},\{G_{\theta,\alpha,j}\}_{j\le i-1})$
(since $G_{\theta,\alpha,j}\stackrel{iid}{\sim}\mathrm{Unif}\{\pm1\}$
independent of $\{Y_{t}\}_{t\ge0}$). Welding these processes together,
we can see from \eqref{eq:dyadic_xdef_f} that $\{X_{\theta,\alpha,t+T_{\theta,i}}-X_{\theta,\alpha,T_{\theta,i}}\}_{t\ge0}$
follows $\mathrm{BM}(0)$ and is independent of $(T_{\theta,i},\{Y_{t}\}_{t\le T_{\theta,i}},\{G_{\theta,\alpha,j}\}_{j\le i})$
for any $\theta,\alpha,i$. Since a random process with continuous
sample paths is characterized by its finite-dimensional marginals,
by letting $i\to-\infty$, we see that $\{X_{\theta,\alpha,t}\}_{t\ge0}\sim\mathrm{BM}(\alpha)$
for any $\theta,\alpha$. \footnote{More precisely, for any $0<\tau_{1}<\cdots<\tau_{m}$, we have $\{X_{\theta,\alpha,\tau_{k}+T_{\theta,i}}-X_{\theta,\alpha,T_{\theta,i}}\}_{k}\to\{X_{\theta,\alpha,\tau_{k}}-\alpha\}_{k}$
as $i\to-\infty$ almost surely since $T_{\theta,i}\to0$ and $X_{\theta,\alpha,T_{\theta,i}}\to\alpha$.
Since $\{X_{\theta,\alpha,\tau_{k}+T_{\theta,i}}-X_{\theta,\alpha,T_{\theta,i}}\}_{k}$
has the same distribution as $\{B_{\tau_{k}}\}_{k}$ (where $\{B_{t}\}_{t}$
is the Brownian motion), $\{X_{\theta,\alpha,\tau_{k}}-\alpha\}_{k}$
also has the same distribution as $\{B_{\tau_{k}}\}_{k}$.} Hence $\{X_{\alpha,t}\}_{t\ge0}\sim\mathrm{BM}(\alpha)$.

\begin{figure}
\begin{centering}
\includegraphics[scale=0.39]{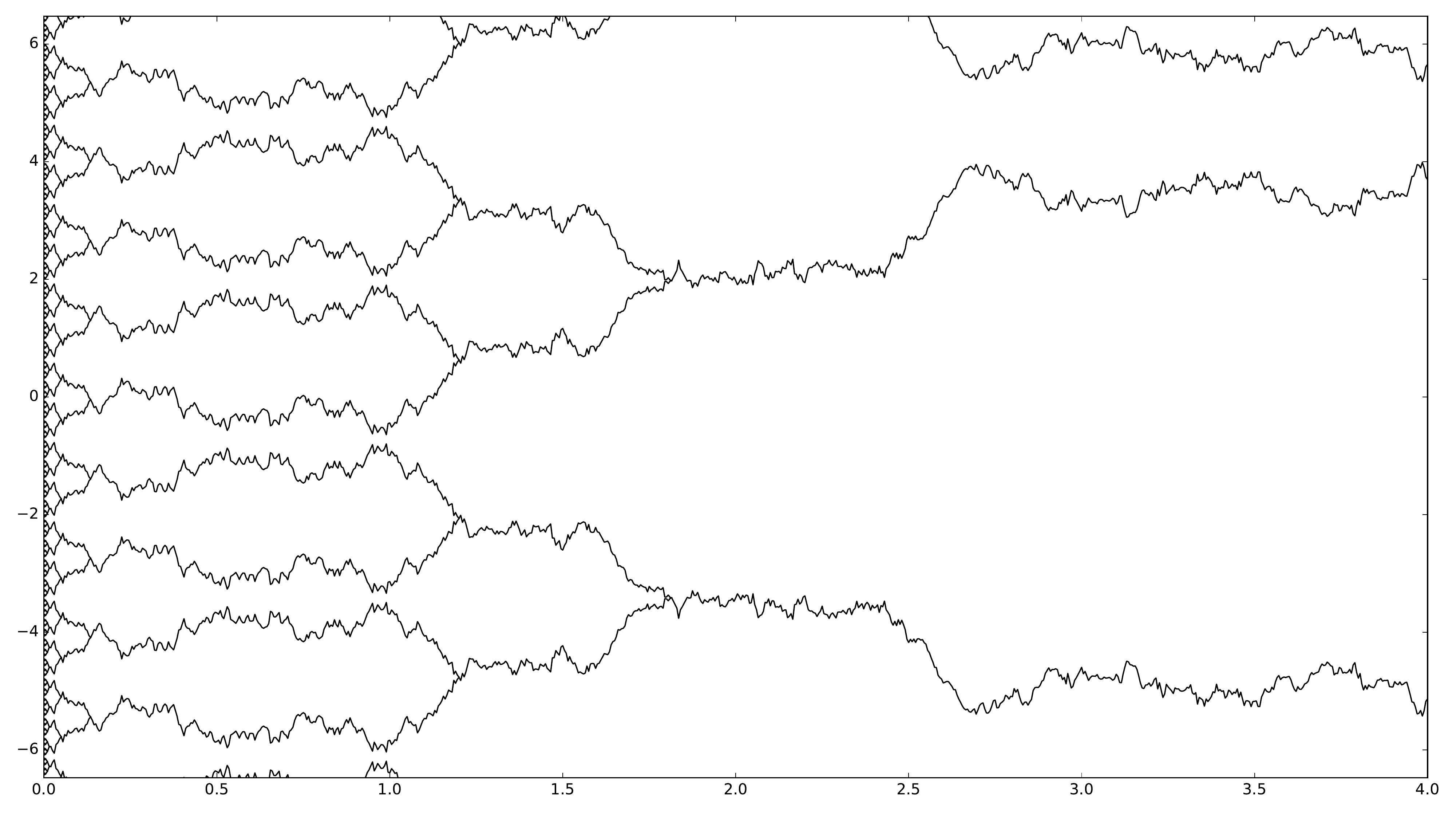}
\par\end{centering}
\caption{\label{fig:sample}A sample of the dyadic grand coupling, where all
the processes $\{\{X_{\alpha,t}\}_{t}\}_{\alpha\in\mathbb{R}}$ are
plotted together. Note that the coalescence points (the points where
two processes join) at the same time are evenly spaced on the space
axis. The processes after the time of each coalescence point can be
regarded as performing the reflection coupling between adjacent pairs
of coalescence points.}
\end{figure}

We then evaluate the probability of failure of this coupling.
\begin{thm}
\label{thm:b_dyadic}For the dyadic grand coupling of Brownian motion
$\{\{X_{\alpha,t}\}_{t\ge0}\}_{\alpha\in\mathbb{R}}$, we have
\begin{align*}
 & \mathbf{P}\Big(\exists\,t\ge s\,\mathrm{s.t.}\,X_{\alpha,t}\neq X_{\beta,t}\Big)\\
 & =\mathbf{P}\Big(\exists\,\gamma_{1},\gamma_{2}\in[\alpha,\beta],t\ge s\,\mathrm{s.t.}\,X_{\gamma_{1},t}\neq X_{\gamma_{2},t}\Big)\\
 & =\int_{\psi/2}^{\infty}\left(\sum_{k=1}^{\infty}2(-1)^{k+1}\exp\left(-\frac{k^{2}\pi^{2}}{2\zeta^{2}}\right)\right)\frac{\zeta^{-2}\psi}{\ln2}\left(\min\left\{ \zeta\psi^{-1},\,1\right\} -\frac{1}{2}\right)\mathrm{d}\zeta
\end{align*}
for any $\alpha<\beta$ and $s>0$, where $\psi:=|\alpha-\beta|/\sqrt{s}$.
\end{thm}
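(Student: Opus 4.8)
The plan is to pass from the path-level description of the coupling to the sequence of ``coalescence partitions'' of $\mathbb{R}$ that it induces at the stopping times $T_{\theta,i}$, use these to reduce the coalescence time of the whole interval $[\alpha,\beta]$ to a single hitting time of the Bessel process $\{Y_t\}$, and then integrate out the remaining randomness. Fix $\theta\in[0,1]$ and write $S_{\theta,i}:=\sum_{k\le i}W_k2^{k+\theta-1}$. The first step is to read off from \eqref{eq:dyadic_xdef_f}, \eqref{eq:dyadic_partial} and \eqref{eq:dyadic_sum} that, at the instant $t=T_{\theta,i}$ (when $Y_t=2^{i+\theta}$),
\[
X_{\theta,\alpha,T_{\theta,i}}=S_{\theta,i}+2^{i+\theta+1}\left\lfloor 2^{-(i+\theta+1)}\bigl(\alpha-S_{\theta,i}\bigr)+\tfrac12\right\rfloor ,
\]
i.e.\ $\alpha$ is sent to the nearest point of the random lattice $\Lambda_{\theta,i}:=S_{\theta,i}+2^{i+\theta+1}\mathbb{Z}$; let $\mathcal{P}_{\theta,i}$ be the partition of $\mathbb{R}$ into the corresponding half-open intervals of length $2^{i+\theta+1}$. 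Because $S_{\theta,i+1}=S_{\theta,i}+W_{i+1}2^{i+\theta}$, each cell of $\mathcal{P}_{\theta,i+1}$ is exactly the union of two adjacent cells of $\mathcal{P}_{\theta,i}$ (which cell pairs with which being dictated by $W_{i+1}$), so $\mathcal{P}_{\theta,i}$ refines $\mathcal{P}_{\theta,i+1}$.

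Next I would analyse the motion on a single interval $(T_{\theta,i-1},T_{\theta,i}]$, where by \eqref{eq:dyadic_xdef_f} $X_{\theta,\alpha,t}=X_{\theta,\alpha,T_{\theta,i-1}}+(Y_t-2^{i+\theta-1})G_{\theta,\alpha,i}$. Using the elementary fact (a direct unpacking of the definition of $G$) that $G_{\theta,\alpha,i}=W_i$ precisely when the index of the $\mathcal{P}_{\theta,i-1}$-cell containing $\alpha$ is even, a short case analysis according to whether $\alpha,\beta$ (with $\alpha<\beta$) lie in the same $\mathcal{P}_{\theta,i-1}$-cell, in adjacent cells that merge at level $i$, in adjacent cells that do not merge, or in cells at distance $\ge2$, shows that $X_{\theta,\cdot,t}$ is nondecreasing in the starting point; that once two of the processes agree they agree forever; and that the pair $(\alpha,\beta)$ coalesces exactly at time $T_{\theta,I}$, where $I=I(\theta,\alpha,\beta):=\min\{i:\,\alpha,\beta\text{ lie in the same cell of }\mathcal{P}_{\theta,i}\}$ --- in the merging case the difference $X_{\theta,\alpha,t}-X_{\theta,\beta,t}$ equals the deterministic affine function $2Y_t-2^{i+\theta+1}$ of $Y_t$, which is strictly negative on $(T_{\theta,i-1},T_{\theta,i})$ and vanishes only at the endpoint. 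Since the cells are intervals and $X_{\theta,\cdot,t}$ is monotone, all processes with starting points in $[\alpha,\beta]$ have coalesced at time $t$ iff $X_{\theta,\alpha,t}=X_{\theta,\beta,t}$; hence $\Upsilon_{\alpha,\beta}=T_{\Theta,I(\Theta,\alpha,\beta)}$, and the event in the theorem is $\{\Upsilon_{\alpha,\beta}>s\}$. I expect this step --- extracting the exact ``coalesces precisely at $T_{\theta,I}$, never earlier and never later'' statement from the compressed formulas \eqref{eq:dyadic_xdef_f}--\eqref{eq:dyadic_xdef_b} via the full case analysis --- to be the main obstacle; everything afterwards is bookkeeping.

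With this reduction, it remains to find the law of $A:=2^{I+\Theta}$ and then integrate. By the refinement property, $\{I\le i\}=\{\alpha,\beta\text{ in the same cell of }\mathcal{P}_{\theta,i}\}$, so only the one-level probability is needed. Writing $S_{\theta,i}=2^{i+\theta-1}\sum_{j\ge0}W_{i-j}2^{-j}$ and using the self-similar identity $Z\stackrel{d}{=}W_i+\tfrac12Z'$ (with $Z'\stackrel{d}{=}Z$ independent of $W_i$) gives $\sum_{j\ge0}W_{i-j}2^{-j}\sim\mathrm{Unif}[-2,2]$, so the shift $S_{\theta,i}$ is uniform on an interval of length $2^{i+\theta+1}$; hence $\mathbf{P}(I\le i\mid\Theta=\theta)=\bigl(1-|\alpha-\beta|\,2^{-(i+\theta+1)}\bigr)_+$. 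Averaging over $\theta\in[0,1]$ (after the substitution $u=v-\theta$ this amounts to integrating a $1$-periodic integrand over one period, so the floor $\lfloor v-\theta\rfloor$ drops out) and differentiating gives that $A$ has density $f_A(a)=\dfrac{|\alpha-\beta|}{a^{2}\ln2}\Bigl(\min\{a/|\alpha-\beta|,\,1\}-\tfrac12\Bigr)$ on $\{a>|\alpha-\beta|/2\}$.

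Finally, $I$ (hence $A$) is $(\Theta,\{W_k\}_k)$-measurable and so independent of $\{Y_t\}$; writing $\tau_a:=\inf\{t:Y_t=a\}$ so that $T_{\theta,i}=\tau_{2^{i+\theta}}$, conditioning on $(\Theta,\{W_k\}_k)$ and using Brownian scaling $\tau_a\stackrel{d}{=}a^2\tau_1$ gives $\mathbf{P}(\Upsilon_{\alpha,\beta}>s)=\E\,g(A/\sqrt{s})$ with $g(\zeta):=\mathbf{P}(\tau_1>\zeta^{-2})$. A standard Sturm--Liouville computation for the $\mathrm{BES}^3$ generator $\tfrac12\partial_{xx}+x^{-1}\partial_x$ on $[0,1]$ (eigenfunctions $\sin(k\pi x)/x$, eigenvalues $\lambda_k=k^{2}\pi^{2}/2$, and the sine expansion $x=\sum_k\frac{2(-1)^{k+1}}{k\pi}\sin(k\pi x)$), after letting the starting point tend to $0$, gives $\mathbf{P}(\tau_1>t)=\sum_{k\ge1}2(-1)^{k+1}e^{-\lambda_k t}$, hence $g(\zeta)=\sum_{k\ge1}2(-1)^{k+1}e^{-k^{2}\pi^{2}/(2\zeta^{2})}$. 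Substituting $f_A$ and changing variables to $\zeta=a/\sqrt{s}$ --- the lower endpoint $a=|\alpha-\beta|/2$ becomes $\zeta=\psi/2$ and $f_A(a)\,da$ becomes $\frac{\zeta^{-2}\psi}{\ln2}\bigl(\min\{\zeta\psi^{-1},1\}-\tfrac12\bigr)\,d\zeta$ --- yields exactly the claimed integral.
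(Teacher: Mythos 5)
Your proposal is correct and follows essentially the same route as the paper: reduce the failure event to the coalescence time $T_{\Theta,I}$, determine the law of $2^{I+\Theta}$ from the uniform distribution of the dyadic shift $S_{\Theta,\cdot}$, and integrate against the $\mathrm{BES}^3$ hitting-time law. The differences are presentational — you recast the algebraic identities \eqref{eq:dyadic_partial}--\eqref{eq:dyadic_xdef_b} as a nested-lattice/coalescing-flow picture and derive the hitting-time tail by a Sturm--Liouville expansion, whereas the paper manipulates the floor identities directly (only the monotonicity of the floor map in $\gamma$ is actually needed to collapse the interval event to the two-endpoint event, rather than full pathwise monotonicity of $X_{\theta,\cdot,t}$) and cites the Kolmogorov-distribution form of $(\pi/2)\sqrt{s}/\sup_{t\le s}Y_t$.
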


\medskip{}

As a consequence, we have the following results.
\begin{cor}
\label{cor:b_tail}Let $\{\{X_{\alpha,t}\}_{t\ge0}\}_{\alpha\in\mathbb{R}}$
be the one-dimensional dyadic grand coupling of Brownian motion. Fix
any $\alpha<\beta$. Let $\{\hat{X}_{\alpha,t}\}_{t\ge0},\{\hat{X}_{\beta,t}\}_{t\ge0}$
be the reflection coupling of $\mathrm{BM}(\alpha),\mathrm{BM}(\beta)$.
Let $\Upsilon_{\alpha,\beta}:=\inf\{s\ge0:\,X_{\gamma_{1},t}=X_{\gamma_{2},t},\,\forall\,\gamma_{1},\gamma_{2}\in[\alpha,\beta],t\ge s\}$
and $\hat{\Upsilon}_{\alpha,\beta}:=\inf\{s\ge0:\,\hat{X}_{\alpha,t}=\hat{X}_{\beta,t},\,\forall\,t\ge s\}$
be the coupling times of the dyadic grand coupling and the reflection
coupling respectively. Let $F_{\Upsilon_{\alpha,\beta}}^{-1}(p):=\inf\{s:\,\mathbf{P}(\Upsilon_{\alpha,\beta}\le s)\ge p\}$
be the inverse distribution function of $\Upsilon_{\alpha,\beta}$,
and define $F_{\hat{\Upsilon}_{\alpha,\beta}}^{-1}(p)$ similarly.
We have:
\begin{enumerate}
\item \label{enu:b_tail_tail}For any $s>0$ (let $\psi:=|\alpha-\beta|/\sqrt{s}$),
\[
\mathbf{P}(\Upsilon_{\alpha,\beta}>s)\le\frac{\psi}{\sqrt{2\pi}}.
\]
As a result,
\[
\lim_{s\to\infty}\frac{\mathbf{P}(\Upsilon_{\alpha,\beta}>s)}{\mathbf{P}(\hat{\Upsilon}_{\alpha,\beta}>s)}=1,
\]
and
\[
\lim_{p\to1}\frac{F_{\Upsilon_{\alpha,\beta}}^{-1}(p)}{F_{\hat{\Upsilon}_{\alpha,\beta}}^{-1}(p)}=1,
\]
i.e., the tail of the distribution of the coupling time of the dyadic
grand coupling approaches that of the reflection coupling as $s\to\infty$.
\item \label{enu:b_tail_head}For any $s>0$ (let $\psi:=|\alpha-\beta|/\sqrt{s}$),
if $\psi\ge2\sqrt{2}$, then
\[
\mathbf{P}(\Upsilon_{\alpha,\beta}>s)\le1-\left(1-\left(\mathrm{erf}\left(\frac{\psi+8/\psi}{2\sqrt{2}}\right)\right)^{3}\right)\frac{\ln(1+8/\psi^{2})+(1+8/\psi^{2})^{-1}-1}{\ln2}.
\]
As a result, 
\[
\lim_{p\to0}\frac{F_{\Upsilon_{\alpha,\beta}}^{-1}(p)}{F_{\hat{\Upsilon}_{\alpha,\beta}}^{-1}(p)}=1,
\]
i.e., the multiplicative gap between $\Upsilon_{\alpha,\beta}$ and
$\hat{\Upsilon}_{\alpha,\beta}$ vanishes as $s\to0$.
\item \label{enu:b_tail_unif}For any $s>0$ (let $\psi:=|\alpha-\beta|/\sqrt{s}$),
\[
\mathbf{P}(\Upsilon_{\alpha,\beta}>s)\le\mathrm{erf}\left(\frac{e\psi}{2}\right).
\]
As a result, $2e^{2}\hat{\Upsilon}_{\alpha,\beta}$ first-order stochastically
dominates $\Upsilon_{\alpha,\beta}$, i.e., the dyadic grand coupling
is pairwise within a multiplicative factor $2e^{2}$ from being maximal.
\end{enumerate}
\end{cor}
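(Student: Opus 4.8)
All three parts follow once the integral in Theorem~\ref{thm:b_dyadic} is recast probabilistically. Keep $\psi=|\alpha-\beta|/\sqrt{s}$ and set $\vartheta(\zeta):=\sum_{k=1}^{\infty}2(-1)^{k+1}\exp(-k^{2}\pi^{2}/(2\zeta^{2}))$. Substituting $v=2\zeta/\psi$ in Theorem~\ref{thm:b_dyadic} turns the measure $\zeta^{-2}\psi(\ln2)^{-1}(\min\{\zeta\psi^{-1},1\}-\tfrac12)\,\mathrm{d}\zeta$ on $[\psi/2,\infty)$ into the measure $(\ln2)^{-1}(v-1)v^{-2}\,\mathrm{d}v$ on $[1,2]$ and $(\ln2)^{-1}v^{-2}\,\mathrm{d}v$ on $[2,\infty)$, which is a probability measure; calling it $\mu$, we get $\mathbf{P}(\Upsilon_{\alpha,\beta}>s)=\mathbf{E}[\vartheta(\psi V/2)]$ with $V\sim\mu$. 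Two facts about $\vartheta$ will be used. First, $\vartheta(\zeta)=\mathbf{P}(\sup_{0\le t\le1}Y_{t}<\zeta)$ for $Y\sim\mathrm{BES}^{3}(0)$ — the classical Bessel first-passage law (invert $\mathbf{E}_{0}[e^{-\lambda T_{\zeta}}]=\zeta\sqrt{2\lambda}/\sinh(\zeta\sqrt{2\lambda})$), which is the identity underlying Theorem~\ref{thm:b_dyadic}. Second, $\vartheta(\zeta)\le(\mathrm{erf}(\zeta/\sqrt{2}))^{3}$: writing $Y=|W|$ with $W=(W^{(1)},W^{(2)},W^{(3)})$ a $3$-dimensional Brownian motion, the event $\{\sup_{[0,1]}|W|<\zeta\}$ forces $\sup_{[0,1]}W^{(j)}<\zeta$ for each of $j=1,2,3$, and these three events are independent with common probability $\mathbf{P}(|W^{(1)}_{1}|<\zeta)=\mathrm{erf}(\zeta/\sqrt{2})$ by the reflection principle. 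Finally recall $\mathbf{P}(\hat{\Upsilon}_{\alpha,\beta}>s)=\mathrm{erf}(\psi/(2\sqrt{2}))$ by \eqref{eq:pfail_reflect} and $\hat{\Upsilon}_{\alpha,\beta}\preceq\Upsilon_{\alpha,\beta}$.

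For part~\ref{enu:b_tail_tail}: on $[\psi/2,\infty)$ one has $\min\{\zeta\psi^{-1},1\}-\tfrac12\in[0,\tfrac12]$ and $\vartheta\ge0$, so $\mathbf{P}(\Upsilon_{\alpha,\beta}>s)\le\tfrac{\psi}{2\ln2}\int_{\psi/2}^{\infty}\vartheta(\zeta)\zeta^{-2}\,\mathrm{d}\zeta\le\tfrac{\psi}{2\ln2}\int_{0}^{\infty}\vartheta(\zeta)\zeta^{-2}\,\mathrm{d}\zeta$. Substituting $u=1/\zeta$ rewrites the last integral as $\int_{0}^{\infty}\sum_{k}2(-1)^{k+1}e^{-k^{2}\pi^{2}u^{2}/2}\,\mathrm{d}u$; its partial sums form an alternating series dominated by the integrable function $2e^{-\pi^{2}u^{2}/2}$, so interchanging sum and integral and using $\int_{0}^{\infty}e^{-k^{2}\pi^{2}u^{2}/2}\,\mathrm{d}u=(k\sqrt{2\pi})^{-1}$ together with $\sum_{k}(-1)^{k+1}/k=\ln2$ shows it equals $2\ln2/\sqrt{2\pi}$; hence $\mathbf{P}(\Upsilon_{\alpha,\beta}>s)\le\psi/\sqrt{2\pi}$. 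The limits are squeeze arguments: $\mathrm{erf}(\psi/(2\sqrt{2}))\le\mathbf{P}(\Upsilon_{\alpha,\beta}>s)\le\psi/\sqrt{2\pi}$ with $\mathrm{erf}(x)\sim2x/\sqrt{\pi}$ as $x\to0$ gives the tail ratio; and $\mathbf{P}(\Upsilon_{\alpha,\beta}>s)\le|\alpha-\beta|/\sqrt{2\pi s}$ gives $F_{\Upsilon_{\alpha,\beta}}^{-1}(p)\le|\alpha-\beta|^{2}/(2\pi(1-p)^{2})$, while $F_{\hat{\Upsilon}_{\alpha,\beta}}^{-1}(p)=|\alpha-\beta|^{2}/(8(\mathrm{erf}^{-1}(1-p))^{2})\sim|\alpha-\beta|^{2}/(2\pi(1-p)^{2})$ and $F_{\hat{\Upsilon}_{\alpha,\beta}}^{-1}(p)\le F_{\Upsilon_{\alpha,\beta}}^{-1}(p)$.

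For part~\ref{enu:b_tail_head}: fix $\psi\ge2\sqrt{2}$ and set $v_{0}:=1+8/\psi^{2}\in[1,2]$. With $M:=\sup_{[0,1]}Y$ independent of $V$, $\mathbf{P}(\Upsilon_{\alpha,\beta}\le s)=\mathbf{P}(M\ge\psi V/2)\ge\mathbf{P}(V\le v_{0})\,\mathbf{P}(M\ge v_{0}\psi/2)=\mathbf{P}(V\le v_{0})(1-\vartheta(v_{0}\psi/2))$ by independence and monotonicity. Now $v_{0}\psi/2=(\psi+8/\psi)/2$, so $1-\vartheta(v_{0}\psi/2)\ge1-(\mathrm{erf}((\psi+8/\psi)/(2\sqrt{2})))^{3}$, and $\mathbf{P}(V\le v_{0})=\int_{1}^{v_{0}}(\ln2)^{-1}(v-1)v^{-2}\,\mathrm{d}v=(\ln v_{0}+v_{0}^{-1}-1)/\ln2$; substituting $v_{0}=1+8/\psi^{2}$ and taking complements gives exactly the stated bound on $\mathbf{P}(\Upsilon_{\alpha,\beta}>s)$. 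For the quantile limit, again $F_{\hat{\Upsilon}_{\alpha,\beta}}^{-1}(p)\le F_{\Upsilon_{\alpha,\beta}}^{-1}(p)$, and the displayed bound produces an $s$ with $\mathbf{P}(\Upsilon_{\alpha,\beta}>s)\le1-p$ whose $\psi$ obeys $\psi^{2}=8\ln(1/p)(1+o(1))$ as $p\to0$: the lower bound on $\mathbf{P}(\Upsilon_{\alpha,\beta}\le s)$ decays, up to sub-exponential factors, like $e^{-\psi^{2}/8}$ (coming from $1-(\mathrm{erf}((\psi+8/\psi)/(2\sqrt{2})))^{3}$) — the same leading rate as $1-\mathrm{erf}(\psi/(2\sqrt{2}))=1-\mathbf{P}(\hat{\Upsilon}_{\alpha,\beta}>s)$; hence the two $p$-quantiles agree to leading order and their ratio tends to $1$.

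For part~\ref{enu:b_tail_unif}: $\vartheta(\zeta)\le(\mathrm{erf}(\zeta/\sqrt{2}))^{3}$ gives $\mathbf{P}(\Upsilon_{\alpha,\beta}>s)\le\mathbf{E}[(\mathrm{erf}(\psi V/(2\sqrt{2})))^{3}]$, and it remains to show this is $\le\mathrm{erf}(e\psi/2)$ for every $\psi>0$; since by \eqref{eq:pfail_reflect} $\mathrm{erf}(e\psi/2)=\mathbf{P}(\hat{\Upsilon}_{\alpha,\beta}>s/(2e^{2}))=\mathbf{P}(2e^{2}\hat{\Upsilon}_{\alpha,\beta}>s)$, this inequality for all $s$ is exactly $\Upsilon_{\alpha,\beta}\preceq2e^{2}\hat{\Upsilon}_{\alpha,\beta}$. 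I would split on $\psi$. For $\psi\le2$, part~\ref{enu:b_tail_tail} already gives $\mathbf{P}(\Upsilon_{\alpha,\beta}>s)\le\psi/\sqrt{2\pi}\le\mathrm{erf}(e\psi/2)$, since $\psi\mapsto\mathrm{erf}(e\psi/2)-\psi/\sqrt{2\pi}$ is concave, vanishes at $\psi=0$, and is positive at $\psi=2$ (as $\mathrm{erf}(e)>2/\sqrt{2\pi}$), hence nonnegative on $[0,2]$. For $\psi\ge2$, $\mathbf{P}(\Upsilon_{\alpha,\beta}\le s)=\mathbf{P}(M\ge\psi V/2)\ge\mathbf{P}(V\le2)\,\mathbf{P}(M\ge\psi)\ge(1-\tfrac{1}{2\ln2})(1-\mathrm{erf}(\psi/\sqrt{2}))$, using $\mathbf{P}(M\ge\psi)\ge\mathbf{P}(\sup_{[0,1]}W^{(1)}\ge\psi)=1-\mathrm{erf}(\psi/\sqrt{2})$; so it suffices that $(1-\tfrac{1}{2\ln2})(1-\mathrm{erf}(\psi/\sqrt{2}))\ge1-\mathrm{erf}(e\psi/2)$ on $[2,\infty)$, i.e.\ that $(1-\mathrm{erf}(e\psi/2))/(1-\mathrm{erf}(\psi/\sqrt{2}))\le1-\tfrac{1}{2\ln2}\approx0.28$ there, and since $e/2>1/\sqrt{2}$ a short computation (Mills-ratio monotonicity) shows this ratio is decreasing in $\psi$, so one need only check $\psi=2$, where it is about $3\times10^{-3}$. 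The main obstacle is exactly this last step: the crude $\psi\le2$ versus $\psi\ge2$ split forces verification of elementary inequalities through particular values of $\mathrm{erf}$, and it is the slack introduced there that keeps $2e^{2}$ well above the optimal constant, consistent with the numerics pointing to $\approx1.5$.
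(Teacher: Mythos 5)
Your proposal is correct, and it keeps the paper's structural backbone --- reading Theorem \ref{thm:b_dyadic} as $\mathbf{E}[\vartheta(Z)]$ with $Z=\psi V/2$ independent of $M=\sup_{t\le1}Y_t$, and the bound $\vartheta(\zeta)\le(\mathrm{erf}(\zeta/\sqrt{2}))^{3}$ via three independent coordinate Brownian motions --- but two of the three parts are argued by genuinely different routes. For part \ref{enu:b_tail_tail}, the paper computes the integral in closed form (in terms of $\mathrm{erf}$ and $\mathrm{Ei}$), evaluates the derivative at $\psi=0$ to get $1/\sqrt{2\pi}$, and then invokes subadditivity/concavity in $\psi$ (Appendix \ref{sec:pf_concave}); you instead bound $\min\{\zeta\psi^{-1},1\}-\tfrac12\le\tfrac12$ and compute $\int_{0}^{\infty}\vartheta(\zeta)\zeta^{-2}\,\mathrm{d}\zeta=2\ln 2/\sqrt{2\pi}$ directly, which yields the same constant with no concavity input and is arguably simpler and more self-contained; your explicit squeeze arguments for the two limits are also fine (the paper leaves these implicit). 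Part \ref{enu:b_tail_head} coincides with the paper's argument (your $v_{0}=1+8/\psi^{2}$ is the paper's $\delta$, and conditioning on $\{V\le v_{0}\}$ is the paper's conditioning on $\{Z\le\psi\delta/2\}$), and your quantile-limit sketch via $\psi_{p}^{2}=8\ln(1/p)(1+o(1))$ is at the same level of detail as the paper's $\kappa$-function argument. For part \ref{enu:b_tail_unif} with $\psi\ge2$, the paper drops the cube, applies Jensen's inequality to the concave map $\gamma\mapsto\mathrm{erf}(e^{\gamma}/\sqrt{2})$, and computes $\mathbf{E}[\ln Z]=\ln\psi-\tfrac{\ln 2}{2}+1$ exactly, which produces $\mathrm{erf}(e\psi/2)$ with no numerics and explains where the constant $e/2$ (hence $2e^{2}$) comes from; you instead lower-bound the success probability by $\bigl(1-\tfrac{1}{2\ln2}\bigr)\mathrm{erfc}(\psi/\sqrt{2})$ and close with a monotone $\mathrm{erfc}$-ratio (Gaussian hazard-rate) argument plus an endpoint check at $\psi=2$ --- valid, but it trades the paper's exact computation for a numerical verification and exploits the large slack in the inequality rather than explaining the constant. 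The remaining caveats are cosmetic: identify $\{\Upsilon_{\alpha,\beta}>s\}$ with the existential event up to null sets (the paper does this implicitly), and note that the lower bounds $\hat{\Upsilon}_{\alpha,\beta}\preceq\Upsilon_{\alpha,\beta}$ you use in the squeeze arguments come from maximality of the reflection coupling, as asserted in the introduction.
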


\medskip{}

These three bounds imply \eqref{eq:stoc_dom} by taking 
\[
r(t):=\frac{1}{t}F_{\Upsilon_{0,1}}^{-1}(F_{\hat{\Upsilon}_{0,1}}(t)).
\]
Note that $\Upsilon_{\alpha,\beta}/|\alpha-\beta|^{2}$ has the same
distribution as $\Upsilon_{0,1}$, and $\hat{\Upsilon}_{\alpha,\beta}/|\alpha-\beta|^{2}$
has the same distribution as $\hat{\Upsilon}_{0,1}$.

\begin{figure}
\begin{centering}
\includegraphics[scale=0.39]{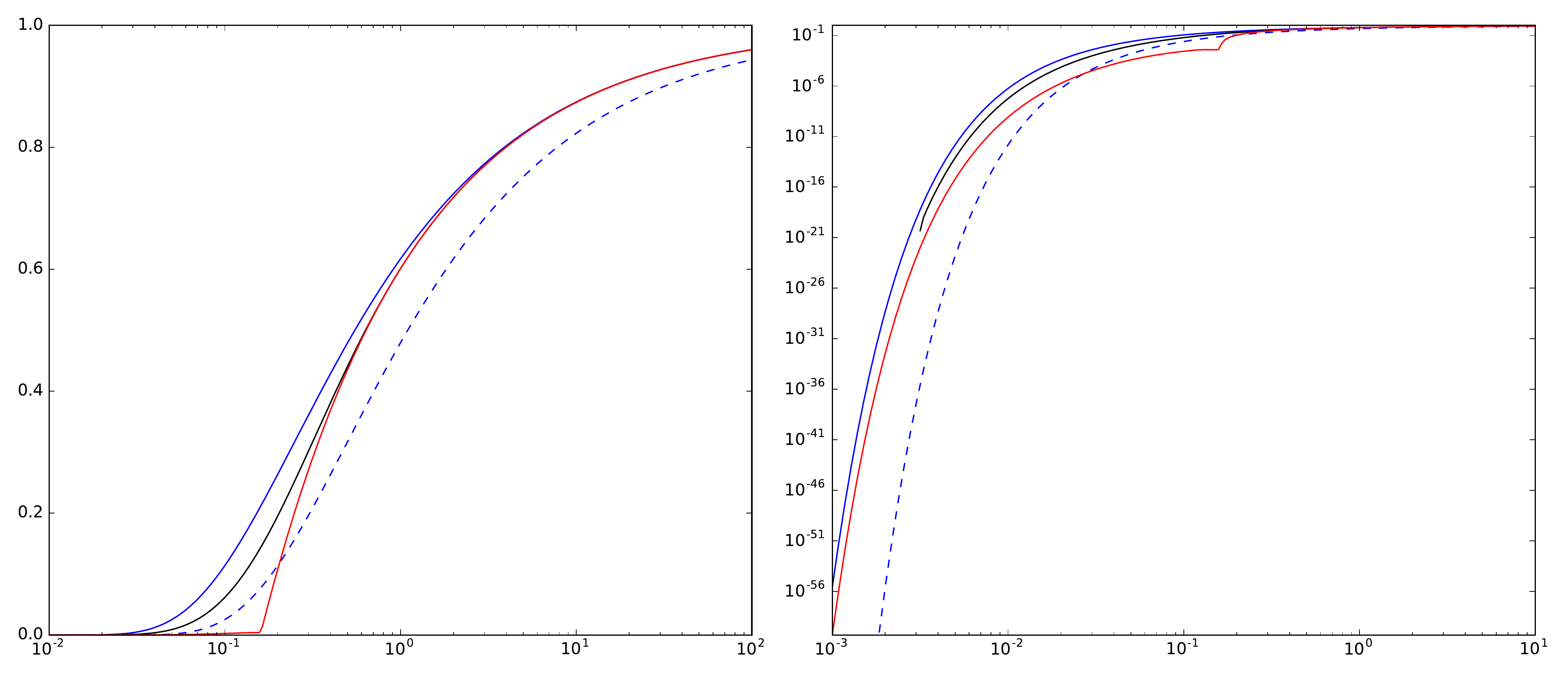}
\par\end{centering}
\caption{\label{fig:bd}Plot of the cumulative distribution function $F_{\Upsilon_{0,1}}$
(black), $F_{\hat{\Upsilon}_{0,1}}$ (blue), the bound on $F_{\Upsilon_{0,1}}$
in Corollary \ref{cor:b_tail} (red) (we take the pointwise maximum
of the three bounds in Corollary \ref{cor:b_tail}), and the cumulative
distribution function of the coupling time of the Brownian web (dashed
line). The left figure is in log-scale for the x-axis, whereas the
right figre is in log-scale for both axes. Note that $F_{\Upsilon_{0,1}}$
(the black curve) is bounded between the blue curve and the red curve.
Due to numerical precision issue, $F_{\Upsilon_{0,1}}(t)$ is not
plotted for small $t$'s in the right figure.}

\end{figure}

\begin{figure}
\begin{centering}
\includegraphics[scale=0.39]{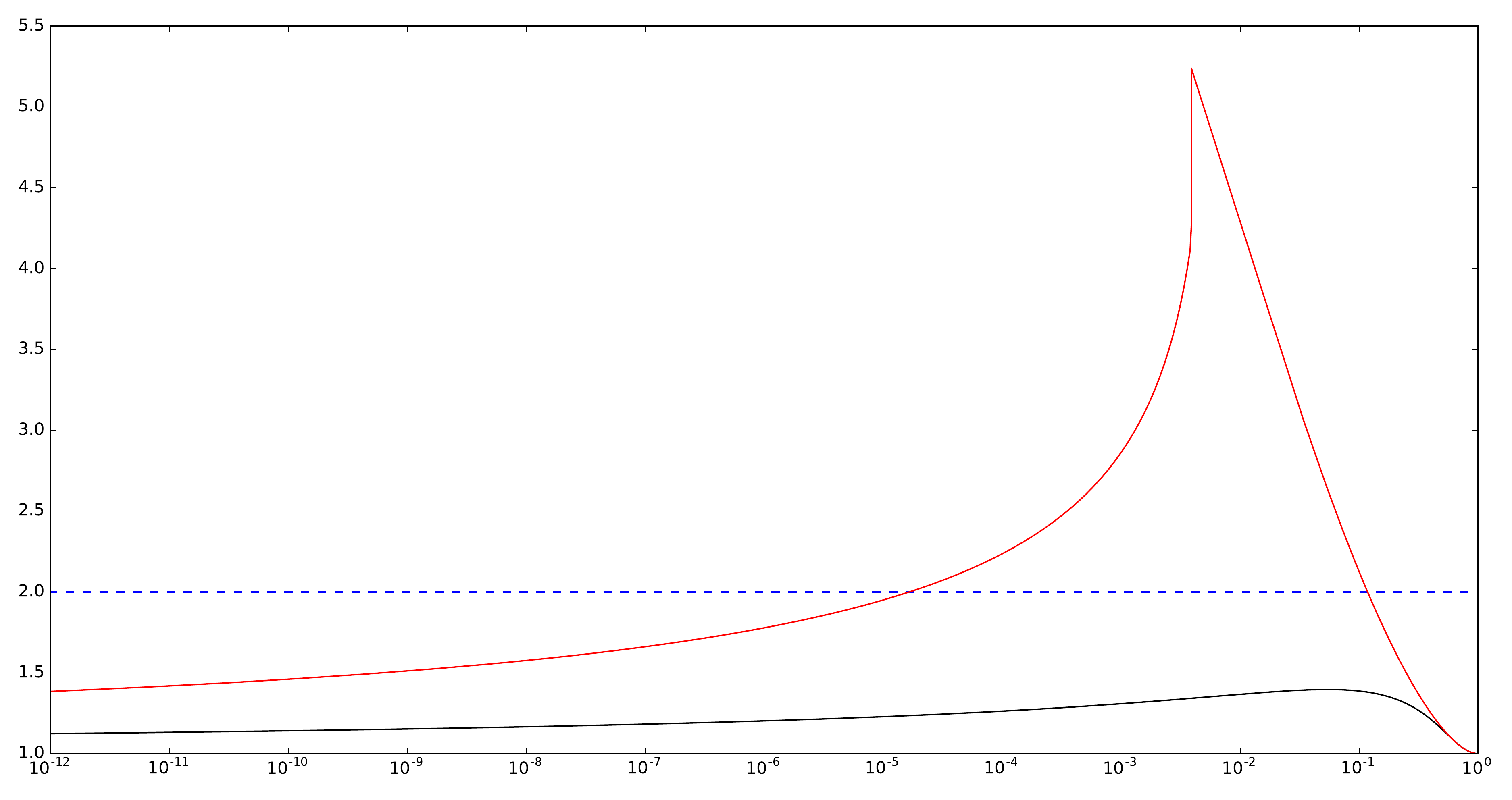}
\par\end{centering}
\caption{\label{fig:bdr}Plot of $F_{\Upsilon_{\alpha,\beta}}^{-1}(p)/F_{\hat{\Upsilon}_{\alpha,\beta}}^{-1}(p)$
(black), the upper bound on $F_{\Upsilon_{\alpha,\beta}}^{-1}(p)/F_{\hat{\Upsilon}_{\alpha,\beta}}^{-1}(p)$
in Corollary \ref{cor:b_tail} (red), and the corresponding ratio
for the Brownian web (dashed line, which is constantly $2$) against
$p$, where $\alpha<\beta$ (these curves do not depend on the choice
of $\alpha,\beta$). While Corollary \ref{cor:b_tail} gives a multiplicative
gap $2e^{2}$, we can observe in this graph that the multiplicative
gap can be improved to around $1.5$, since $F_{\Upsilon_{\alpha,\beta}}^{-1}(p)/F_{\hat{\Upsilon}_{\alpha,\beta}}^{-1}(p)$
stays below $1.5$ for all $p$.}
\end{figure}

We first prove Theorem \ref{thm:b_dyadic}.
\begin{proof}
[Proof of Theorem \ref{thm:b_dyadic}] Let $I$ be such that $T_{\Theta,I-1}<s\le T_{\Theta,I}$.
We have 
\begin{align}
 & \mathbf{P}\Big(\exists\,t\ge s\;\mathrm{s.t.}\,X_{\alpha,t}\neq X_{\beta,t}\Big)\nonumber \\
 & \stackrel{(a)}{=}\mathbf{P}\left(\exists\,k\ge I\;\mathrm{s.t.}\,G_{\Theta,\alpha,k}\neq G_{\Theta,\beta,k}\right)\nonumber \\
 & =\mathbf{P}\left(\exists\,k\ge I\;\mathrm{s.t.}\,\sum_{j=k}^{\infty}(W_{j}-G_{\Theta,\alpha,j})2^{j-k-1}\neq\sum_{j=k}^{\infty}(W_{j}-G_{\Theta,\beta,j})2^{j-k-1}\right)\nonumber \\
 & \stackrel{(b)}{=}\mathbf{P}\left(\exists\,k\ge I\;\mathrm{s.t.}\,\left\lfloor 2^{-(k+\Theta)}\!\left(\!\alpha\!-\!\sum_{j=-\infty}^{k-1}W_{j}2^{j+\Theta-1}\!\right)\!+\frac{1}{2}\right\rfloor \neq\left\lfloor 2^{-(k+\Theta)}\!\left(\!\beta\!-\!\sum_{j=-\infty}^{k-1}W_{j}2^{j+\Theta-1}\!\right)\!+\frac{1}{2}\right\rfloor \right)\nonumber \\
 & =\mathbf{P}\left(s\le T_{\Theta,\max\left\{ k:\,\left\lfloor 2^{-(k+\Theta)}\left(\alpha-\sum_{j=-\infty}^{k-1}W_{j}2^{j+\Theta-1}\right)+\frac{1}{2}\right\rfloor \neq\left\lfloor 2^{-(k+\Theta)}\left(\beta-\sum_{j=-\infty}^{k-1}W_{j}2^{j+\Theta-1}\right)+\frac{1}{2}\right\rfloor \right\} }\right)\nonumber \\
 & =\mathbf{P}\left(\sup_{t\le s}Y_{t}\le2^{\max\left\{ k:\,\left\lfloor 2^{-(k+\Theta)}\left(\alpha-\sum_{j=-\infty}^{k-1}W_{j}2^{j+\Theta-1}\right)+\frac{1}{2}\right\rfloor \neq\left\lfloor 2^{-(k+\Theta)}\left(\beta-\sum_{j=-\infty}^{k-1}W_{j}2^{j+\Theta-1}\right)+\frac{1}{2}\right\rfloor \right\} +\Theta}\right)\label{eq:p_sup_bd}\\
 & \stackrel{(c)}{=}\mathbf{E}\left[\sum_{k=1}^{\infty}2(-1)^{k+1}\exp\left(-\frac{k^{2}\pi^{2}}{2Z^{2}}\right)\right],\nonumber 
\end{align}
where 
\[
Z:=s^{-1/2}2^{\max\left\{ k:\,\left\lfloor 2^{-(k+\Theta)}\left(\alpha-\sum_{j=-\infty}^{k-1}W_{j}2^{j+\Theta-1}\right)+\frac{1}{2}\right\rfloor \neq\left\lfloor 2^{-(k+\Theta)}\left(\beta-\sum_{j=-\infty}^{k-1}W_{j}2^{j+\Theta-1}\right)+\frac{1}{2}\right\rfloor \right\} +\Theta}.
\]
Here (a) comes from \eqref{eq:dyadic_xdef_b}, (b) is due to \eqref{eq:dyadic_partial},
and (c) is because $(\pi/2)\sqrt{s}/\sup_{t\le s}Y_{t}$ follows the
Kolmogorov distribution \cite{borodin2012handbook,pitman1999law}.
By the same arguments,
\begin{align*}
 & \mathbf{P}\Big(\exists\,\gamma_{1},\gamma_{2}\in[\alpha,\beta],t\ge s\;\mathrm{s.t.}\,X_{\gamma_{1},t}\neq X_{\gamma_{2},t}\Big)\\
 & =\mathbf{P}\left(\exists\,\gamma_{1},\gamma_{2}\in[\alpha,\beta],k\ge I\;\mathrm{s.t.}\,G_{\Theta,\gamma_{1},k}\neq G_{\Theta,\gamma_{2},k}\right)\\
 & =\!\mathbf{P}\!\!\left(\!\exists\,\gamma_{1},\!\gamma_{2}\!\in\![\alpha,\beta],k\!\ge\!I\,\mathrm{s.t.}\left\lfloor 2^{-(k+\Theta)}\!\!\left(\!\gamma_{1}\!-\!\!\!\!\sum_{j=-\infty}^{k-1}\!\!\!W_{j}2^{j+\Theta-1}\!\right)\!\!+\!\frac{1}{2}\right\rfloor \!\neq\!\left\lfloor 2^{-(k+\Theta)}\!\!\left(\!\gamma_{2}\!-\!\!\!\!\sum_{j=-\infty}^{k-1}\!\!\!W_{j}2^{j+\Theta-1}\!\right)\!\!+\!\frac{1}{2}\right\rfloor \!\right)\\
 & =\mathbf{P}\left(\exists\,k\ge I\;\mathrm{s.t.}\,\left\lfloor 2^{-(k+\Theta)}\!\left(\!\alpha\!-\!\sum_{j=-\infty}^{k-1}W_{j}2^{j+\Theta-1}\!\right)\!+\frac{1}{2}\right\rfloor \neq\left\lfloor 2^{-(k+\Theta)}\!\left(\!\beta\!-\!\sum_{j=-\infty}^{k-1}W_{j}2^{j+\Theta-1}\!\right)\!+\frac{1}{2}\right\rfloor \right),
\end{align*}
and hence the two probabilities in Theorem \ref{thm:b_dyadic} are
equal.

To find the distribution of $Z$, we have
\begin{align*}
 & \mathbf{P}\left(\max\left\{ k:\,\left\lfloor 2^{-(k+\theta)}\left(\alpha-\sum_{j=-\infty}^{k-1}W_{j}2^{j+\theta-1}\right)+\frac{1}{2}\right\rfloor \neq\left\lfloor 2^{-(k+\theta)}\left(\beta-\sum_{j=-\infty}^{k-1}W_{j}2^{j+\theta-1}\right)+\frac{1}{2}\right\rfloor \right\} \ge l\right)\\
 & \stackrel{(a)}{=}\mathbf{P}\left(\exists\,k\ge l\,\mathrm{s.t.}\,\sum_{j=k}^{\infty}(W_{j}-G_{\theta,\alpha,j})2^{j-k-1}\neq\sum_{j=k}^{\infty}(W_{j}-G_{\theta,\beta,j})2^{j-k-1}\right)\\
 & =\mathbf{P}\left(\sum_{j=l}^{\infty}(W_{j}-G_{\theta,\alpha,j})2^{j-l-1}\neq\sum_{j=l}^{\infty}(W_{j}-G_{\theta,\beta,j})2^{j-l-1}\right)\\
 & \stackrel{(b)}{=}\mathbf{P}\left(\left\lfloor 2^{-(l+\theta)}\left(\alpha-\sum_{j=-\infty}^{l-1}W_{j}2^{j+\theta-1}\right)+\frac{1}{2}\right\rfloor \neq\left\lfloor 2^{-(l+\theta)}\left(\beta-\sum_{j=-\infty}^{l-1}W_{j}2^{j+\theta-1}\right)+\frac{1}{2}\right\rfloor \right)\\
 & \stackrel{(c)}{=}\frac{1}{2}\int_{-1}^{1}\mathbf{1}\left\{ \left\lfloor 2^{-(l+\theta)}\left(\alpha-\gamma2^{l+\theta-1}\right)+\frac{1}{2}\right\rfloor \neq\left\lfloor 2^{-(l+\theta)}\left(\beta-\gamma2^{l+\theta-1}\right)+\frac{1}{2}\right\rfloor \right\} \mathrm{d}\gamma\\
 & =\min\left\{ 2^{-(l+\theta)}|\alpha-\beta|,\,1\right\} ,
\end{align*}
where (a) and (b) are due to \eqref{eq:dyadic_partial}, and (c) is
because $\sum_{j=-\infty}^{l-1}W_{j}2^{j+\theta-1}\sim\mathrm{Unif}[-2^{l+\theta-1},2^{l+\theta-1}]$.
Therefore,
\begin{align*}
 & \mathbf{P}\left(Z\ge\zeta\right)\\
 & =\mathbf{P}\Bigg(\max\left\{ k:\,\left\lfloor 2^{-(k+\Theta)}\left(\alpha-\sum_{j=-\infty}^{k-1}W_{j}2^{j+\Theta-1}\right)+\frac{1}{2}\right\rfloor \neq\left\lfloor 2^{-(k+\Theta)}\left(\beta-\sum_{j=-\infty}^{k-1}W_{j}2^{j+\Theta-1}\right)+\frac{1}{2}\right\rfloor \right\} \\
 & \;\;\;\;\;\;\ge\left\lceil \log_{2}(\zeta\sqrt{s})-\Theta\right\rceil \Bigg)\\
 & =\int_{0}^{1}\min\left\{ 2^{-(\lceil\log_{2}(\zeta\sqrt{s})-\theta\rceil+\theta)}|\alpha-\beta|,\,1\right\} \mathrm{d}\theta\\
 & =\int_{0}^{1}\min\left\{ 2^{-(\log_{2}(\zeta\sqrt{s})+\theta)}|\alpha-\beta|,\,1\right\} \mathrm{d}\theta\\
 & =\int_{0}^{1}\min\left\{ \zeta^{-1}s^{-1/2}2^{-\theta}|\alpha-\beta|,\,1\right\} \mathrm{d}\theta\\
 & =\int_{0}^{1}\min\left\{ \zeta^{-1}2^{-\theta}\psi,\,1\right\} \mathrm{d}\theta,
\end{align*}
where $\psi:=|\alpha-\beta|/\sqrt{s}$. Hence,
\begin{align*}
 & -\frac{\mathrm{d}}{\mathrm{d}\zeta}\mathbf{P}\left(Z\ge\zeta\right)\\
 & =-\int_{0}^{1}\frac{\mathrm{d}}{\mathrm{d}\zeta}\min\left\{ \zeta^{-1}2^{-\theta}\psi,\,1\right\} \mathrm{d}\theta\\
 & =\int_{0}^{1}\mathbf{1}\left\{ \zeta^{-1}2^{-\theta}\psi\le1\right\} \zeta^{-2}2^{-\theta}\psi\mathrm{d}\theta\\
 & =\zeta^{-2}\psi\int_{\min\{\max\{\log_{2}(\zeta^{-1}\psi),\,0\},\,1\}}^{1}2^{-\theta}\mathrm{d}\theta\\
 & =\frac{\zeta^{-2}\psi}{\ln2}\left(2^{-\min\{\max\{\log_{2}(\zeta^{-1}\psi),\,0\},\,1\}}-\frac{1}{2}\right),
\end{align*}
and thus
\begin{align*}
 & \mathbf{P}\Big(\exists\,\gamma_{1},\gamma_{2}\in[\alpha,\beta],t\ge s\,\mathrm{s.t.}\,X_{\gamma_{1},t}\neq X_{\gamma_{2},t}\Big)\\
 & =\int_{0}^{\infty}\left(\sum_{k=1}^{\infty}2(-1)^{k+1}\exp\left(-\frac{k^{2}\pi^{2}}{2\zeta^{2}}\right)\right)\frac{\zeta^{-2}\psi}{\ln2}\left(2^{-\min\{\max\{\log_{2}(\zeta^{-1}\psi),\,0\},\,1\}}-\frac{1}{2}\right)\mathrm{d}\zeta\\
 & =\int_{\psi/2}^{\infty}\left(\sum_{k=1}^{\infty}2(-1)^{k+1}\exp\left(-\frac{k^{2}\pi^{2}}{2\zeta^{2}}\right)\right)\frac{\zeta^{-2}\psi}{\ln2}\left(\min\left\{ \zeta\psi^{-1},\,1\right\} -\frac{1}{2}\right)\mathrm{d}\zeta.
\end{align*}
\end{proof}
\medskip{}

We then prove Corollary \ref{cor:b_tail}.
\begin{proof}
[Proof of Corollary \ref{cor:b_tail}] We first prove Corollary
\ref{cor:b_tail}.\ref{enu:b_tail_tail}. By Theorem \ref{thm:b_dyadic},
\begin{align*}
 & \mathbf{P}\Big(\exists\,\gamma_{1},\gamma_{2}\in[\alpha,\beta],t\ge s\,\mathrm{s.t.}\,X_{\gamma_{1},t}\neq X_{\gamma_{2},t}\Big)\\
 & =\int_{\psi/2}^{\infty}\left(\sum_{k=1}^{\infty}2(-1)^{k+1}\exp\left(-\frac{k^{2}\pi^{2}}{2\zeta^{2}}\right)\right)\frac{\zeta^{-2}\psi}{\ln2}\left(\min\left\{ \zeta\psi^{-1},\,1\right\} -\frac{1}{2}\right)\mathrm{d}\zeta\\
 & =\sum_{k=1}^{\infty}2(-1)^{k+1}\int_{\psi/2}^{\infty}\exp\left(-\frac{k^{2}\pi^{2}}{2\zeta^{2}}\right)\frac{\zeta^{-2}\psi}{\ln2}\left(\min\left\{ \zeta\psi^{-1},\,1\right\} -\frac{1}{2}\right)\mathrm{d}\zeta\\
 & =\sum_{k=1}^{\infty}2(-1)^{k+1}\left(\frac{\frac{\psi}{\sqrt{2\pi}k}\mathrm{erf}\left(\frac{\pi k}{\sqrt{2}\psi}\right)-\mathrm{Ei}\left(-\frac{\pi^{2}k^{2}}{2\psi^{2}}\right)}{2\ln2}-\frac{\frac{\psi}{\sqrt{2\pi}k}\mathrm{erf}\left(\frac{\sqrt{2}\pi k}{\psi}\right)-\mathrm{Ei}\left(-\frac{2\pi^{2}k^{2}}{\psi^{2}}\right)}{2\ln2}+\frac{\frac{\psi}{\sqrt{2\pi}k}\mathrm{erf}\left(\frac{\pi k}{\sqrt{2}\psi}\right)}{2\ln2}\right)\\
 & =\frac{1}{\ln2}\sum_{k=1}^{\infty}(-1)^{k+1}\left(\frac{2\psi}{\sqrt{2\pi}k}\mathrm{erf}\left(\frac{\pi k}{\sqrt{2}\psi}\right)-\frac{\psi}{\sqrt{2\pi}k}\mathrm{erf}\left(\frac{\sqrt{2}\pi k}{\psi}\right)-\mathrm{Ei}\left(-\frac{\pi^{2}k^{2}}{2\psi^{2}}\right)+\mathrm{Ei}\left(-\frac{2\pi^{2}k^{2}}{\psi^{2}}\right)\right),
\end{align*}
where $\mathrm{Ei}(\gamma):=-\int_{-\gamma}^{\infty}(e^{-x}/x)\mathrm{d}x$
is the exponential integral function. We have
\begin{align}
 & \left.\frac{\mathrm{d}}{\mathrm{d}\psi}\mathbf{P}\Big(\exists\,\gamma_{1},\gamma_{2}\in[\alpha,\beta],t\ge s\,\mathrm{s.t.}\,X_{\gamma_{1},t}\neq X_{\gamma_{2},t}\Big)\right|_{\psi=0}\nonumber \\
 & =\frac{1}{\ln2}\sum_{k=1}^{\infty}(-1)^{k+1}\left.\frac{\mathrm{d}}{\mathrm{d}\psi}\left(\frac{2\psi}{\sqrt{2\pi}k}\mathrm{erf}\left(\frac{\pi k}{\sqrt{2}\psi}\right)-\frac{\psi}{\sqrt{2\pi}k}\mathrm{erf}\left(\frac{\sqrt{2}\pi k}{\psi}\right)-\mathrm{Ei}\left(-\frac{\pi^{2}k^{2}}{2\psi^{2}}\right)+\mathrm{Ei}\left(-\frac{2\pi^{2}k^{2}}{\psi^{2}}\right)\right)\right|_{\psi=0}\nonumber \\
 & =\frac{1}{\ln2}\sum_{k=1}^{\infty}(-1)^{k+1}\left(\frac{2}{\sqrt{2\pi}k}-\frac{1}{\sqrt{2\pi}k}\right)\nonumber \\
 & =\frac{1}{\sqrt{2\pi}\ln2}\sum_{k=1}^{\infty}\frac{(-1)^{k+1}}{k}\nonumber \\
 & =\frac{1}{\sqrt{2\pi}}.\label{eq:p_psi_deriv}
\end{align}
Also, for any $\alpha_{1}<\alpha_{2}<\alpha_{3}$,
\begin{align*}
 & \mathbf{P}\Big(\exists\,\gamma_{1},\gamma_{2}\in[\alpha_{1},\alpha_{3}],t\ge s\,\mathrm{s.t.}\,X_{\gamma_{1},t}\neq X_{\gamma_{2},t}\Big)\\
 & \le\mathbf{P}\Big(\exists\,\gamma_{1},\gamma_{2}\in[\alpha_{1},\alpha_{2}],t\ge s\,\mathrm{s.t.}\,X_{\gamma_{1},t}\neq X_{\gamma_{2},t}\Big)+\mathbf{P}\Big(\exists\,\gamma_{1},\gamma_{2}\in[\alpha_{2},\alpha_{3}],t\ge s\,\mathrm{s.t.}\,X_{\gamma_{1},t}\neq X_{\gamma_{2},t}\Big).
\end{align*}
Hence $\mathbf{P}(\exists\,\gamma_{1},\gamma_{2}\in[\alpha,\beta],t\ge s\,\mathrm{s.t.}\,X_{\gamma_{1},t}\neq X_{\gamma_{2},t})$
(which only depends on $\psi$) is subadditive in $\psi$ (in fact,
it is shown in Appendix \ref{sec:pf_concave} that it is concave).
Combining this with \eqref{eq:p_psi_deriv}, we have
\[
\mathbf{P}\Big(\exists\,\gamma_{1},\gamma_{2}\in[\alpha,\beta],t\ge s\,\mathrm{s.t.}\,X_{\gamma_{1},t}\neq X_{\gamma_{2},t}\Big)\le\frac{\psi}{\sqrt{2\pi}}.
\]

For Corollary \ref{cor:b_tail}.\ref{enu:b_tail_unif}, if $\psi\ge2$,
by \eqref{eq:p_sup_bd},
\begin{align*}
 & \mathbf{P}\Big(\exists\,\gamma_{1},\gamma_{2}\in[\alpha,\beta],t\ge s\,\mathrm{s.t.}\,X_{\gamma_{1},t}\neq X_{\gamma_{2},t}\Big)\\
 & =\mathbf{P}\big(\sup_{t\le s}Y_{t}\le\sqrt{s}Z\big)\\
 & =\mathbf{P}\big(\sup_{t\le1}Y_{t}\le Z\big)\\
 & \stackrel{(a)}{\le}\mathbf{P}\big(\sup_{t\le1}B_{1,t}\le Z\;\mathrm{and}\;\sup_{t\le1}B_{2,t}\le Z\;\mathrm{and}\;\sup_{t\le1}B_{3,t}\le Z\big)\\
 & =\mathbf{E}\left[\left(\mathrm{erf}\left(Z/\sqrt{2}\right)\right)^{3}\right]\\
 & \le\mathbf{E}\left[\mathrm{erf}\left(\exp(\ln Z)/\sqrt{2}\right)\right]\\
 & \stackrel{(b)}{\le}\mathrm{erf}\left(\exp(\mathbf{E}\left[\ln Z\right])/\sqrt{2}\right),
\end{align*}
where in (a), we let $Y_{t}=\sqrt{B_{1,t}^{2}+B_{2,t}^{2}+B_{3,t}^{2}}$,
where $\{B_{1,t}\}_{t},\{B_{2,t}\}_{t},\{B_{3,t}\}_{t}$ are independent
Brownian motions, and (b) is because $\gamma\mapsto\mathrm{erf}(\exp(\gamma)/\sqrt{2})$
is concave for $\gamma\ge0$ (note that $Z\ge\psi/2\ge1$). We have
\begin{align*}
 & \mathbf{E}\left[\ln Z\right]\\
 & =\int_{\psi/2}^{\infty}(\ln\zeta)\frac{\zeta^{-2}\psi}{\ln2}\left(\min\left\{ \zeta\psi^{-1},\,1\right\} -\frac{1}{2}\right)\mathrm{d}\zeta\\
 & =\frac{\psi\ln\psi+\psi+\psi(\ln\psi)^{2}}{2\psi\ln2}-\frac{\psi\ln(\psi/2)+\psi+(\psi/2)(\ln(\psi/2))^{2}}{2(\psi/2)\ln2}+\frac{\psi}{2\ln2}\cdot\frac{\ln\psi+1}{\psi}\\
 & =\frac{\ln\psi+1+(\ln\psi)^{2}}{2\ln2}-\frac{2\ln\psi-2\ln2+2+(\ln\psi-\ln2)^{2}}{2\ln2}+\frac{\ln\psi+1}{2\ln2}\\
 & =\frac{1}{2\ln2}\left((\ln\psi)^{2}-(\ln\psi-\ln2)^{2}\right)+1\\
 & =\frac{1}{2\ln2}(2\ln\psi-\ln2)\ln2+1\\
 & =\ln\psi-\frac{\ln2}{2}+1.
\end{align*}
Therefore,
\begin{align*}
 & \mathbf{P}\Big(\exists\,\gamma_{1},\gamma_{2}\in[\alpha,\beta],t\ge s\,\mathrm{s.t.}\,X_{\gamma_{1},t}\neq X_{\gamma_{2},t}\Big)\\
 & \le\mathrm{erf}\left(\frac{1}{\sqrt{2}}\exp\left(\ln\psi-\frac{\ln2}{2}+1\right)\right)\\
 & =\mathrm{erf}\left(\frac{e\psi}{2}\right).
\end{align*}
If $\psi<2$, then
\begin{align*}
 & \mathbf{P}\Big(\exists\,\gamma_{1},\gamma_{2}\in[\alpha,\beta],t\ge s\,\mathrm{s.t.}\,X_{\gamma_{1},t}\neq X_{\gamma_{2},t}\Big)\\
 & \le\frac{\psi}{\sqrt{2\pi}}\\
 & \le\mathrm{erf}\left(\frac{e\psi}{2}\right).
\end{align*}
The result follows.

For Corollary \ref{cor:b_tail}.\ref{enu:b_tail_head}, for any $1<\delta\le2$,
\begin{align*}
 & \mathbf{P}\Big(\exists\,\gamma_{1},\gamma_{2}\in[\alpha,\beta],t\ge s\,\mathrm{s.t.}\,X_{\gamma_{1},t}\neq X_{\gamma_{2},t}\Big)\\
 & \le\mathbf{E}\left[\left(\mathrm{erf}\left(Z/\sqrt{2}\right)\right)^{3}\right]\\
 & \le1-\left(1-\left(\mathrm{erf}\left(\frac{\psi\delta}{2\sqrt{2}}\right)\right)^{3}\right)\mathbf{P}(Z\le\psi\delta/2)\\
 & =1-\left(1-\left(\mathrm{erf}\left(\frac{\psi\delta}{2\sqrt{2}}\right)\right)^{3}\right)\int_{\psi/2}^{\psi\delta/2}\frac{\zeta^{-2}\psi}{\ln2}\left(\min\left\{ \zeta\psi^{-1},\,1\right\} -\frac{1}{2}\right)\mathrm{d}\zeta\\
 & =1-\left(1-\left(\mathrm{erf}\left(\frac{\psi\delta}{2\sqrt{2}}\right)\right)^{3}\right)\left(\frac{\psi}{2(\ln2)(\psi\delta/2)}+\frac{\ln(\psi\delta/2)}{\ln2}-\frac{\psi}{2(\ln2)(\psi/2)}-\frac{\ln(\psi/2)}{\ln2}\right)\\
 & =1-\left(1-\left(\mathrm{erf}\left(\frac{\psi\delta}{2\sqrt{2}}\right)\right)^{3}\right)\frac{\ln\delta+\delta^{-1}-1}{\ln2}
\end{align*}
If $\psi\ge2\sqrt{2}$, substituting $\delta=1+8/\psi^{2}$, we have
\begin{align*}
 & \mathbf{P}\Big(\exists\,\gamma_{1},\gamma_{2}\in[\alpha,\beta],t\ge s\,\mathrm{s.t.}\,X_{\gamma_{1},t}\neq X_{\gamma_{2},t}\Big)\\
 & \le1-\left(1-\left(\mathrm{erf}\left(\frac{\psi+8/\psi}{2\sqrt{2}}\right)\right)^{3}\right)\frac{\ln(1+8/\psi^{2})+(1+8/\psi^{2})^{-1}-1}{\ln2}\\
 & \le1-\left(1-\mathrm{erf}\left(\frac{\psi+8/\psi}{2\sqrt{2}}\right)\right)\frac{\ln(1+8/\psi^{2})+(1+8/\psi^{2})^{-1}-1}{\ln2}.
\end{align*}
Note that $\ln(1+8/\psi^{2})+(1+8/\psi^{2})^{-1}-1=\Omega(\psi^{-4})$
as $\psi\to\infty$, whereas $1-\mathrm{erf}(x)\to0$ exponentially
as $x\to\infty$. Therefore there exists a function $\kappa:\mathbb{R}_{>0}\to\mathbb{R}_{>0}$
such that $\lim_{t\to\infty}\kappa(t)=0$, and
\begin{align*}
 & \mathbf{P}\Big(\exists\,\gamma_{1},\gamma_{2}\in[\alpha,\beta],t\ge s\,\mathrm{s.t.}\,X_{\gamma_{1},t}\neq X_{\gamma_{2},t}\Big)\\
 & \le\mathrm{erf}\left(\frac{(1+\kappa(t))\psi}{2\sqrt{2}}\right).
\end{align*}
This implies that
\[
\lim_{p\to0}\frac{F_{\Upsilon_{\alpha,\beta}}^{-1}(p)}{F_{\hat{\Upsilon}_{\alpha,\beta}}^{-1}(p)}=1.
\]

\end{proof}
\smallskip{}

\section{Nonexistence of a Pairwise Maximal Coupling\label{sec:nonexist}}

In this section, we show that there does not exist a pairwise maximal
coupling $\{\{\tilde{X}_{\alpha,t}\}_{t\ge0}\}_{\alpha\in\mathbb{R}}$
of $\{\mathrm{BM}(\alpha)\}_{\alpha\in\mathbb{R}}$, that is, one
in which every pair $\{\tilde{X}_{\alpha,t}\}_{t\ge0},\{\tilde{X}_{\beta,t}\}_{t\ge0}$
is a maximal coupling, i.e.,
\begin{equation}
\mathbf{P}\Big(\exists\,t\ge s\,\mathrm{s.t.}\,\tilde{X}_{\alpha,t}\neq\tilde{X}_{\beta,t}\Big)=\mathrm{erf}\left(\frac{|\alpha-\beta|}{2\sqrt{2s}}\right).\label{eq:pfail_pairmax}
\end{equation}

Note that both \eqref{eq:pfail_pairmax} and the expression in Theorem
\ref{thm:b_dyadic} depend only on $\psi:=|\alpha-\beta|/\sqrt{s}$.
\begin{defn}
We\emph{ say that a function $h:[0,\infty)\to[0,1]$ is an attainable
failure probability bound} if there exists a coupling $\{\{\tilde{X}_{\alpha,t}\}_{t\ge0}\}_{\alpha\in\mathbb{R}}$
of $\{\mathrm{BM}(\alpha)\}_{\alpha\in\mathbb{R}}$ such that
\[
\mathbf{P}\left(\exists\,t\ge s\,\mathrm{s.t.}\,\tilde{X}_{\alpha,t}\neq\tilde{X}_{\beta,t}\right)\le h\left(\frac{|\alpha-\beta|}{\sqrt{s}}\right)
\]
for all $\alpha,\beta,\in\mathbb{R}$, $s>0$. We say that $c>0$
is an \emph{attainable multiplicative gap} if $x\mapsto\mathrm{erf}(x\sqrt{c}/(2\sqrt{2}))$
is an attainable failure probability bound.
\end{defn}

One attainable failure probability bound is given in Theorem \ref{thm:b_dyadic}.
Corollary \ref{cor:b_tail}.\ref{enu:b_tail_unif} implies that $2e^{2}$
is an attainable multiplicative gap. A pairwise maximal coupling exists
if and only if $x\mapsto\mathrm{erf}(x/(2\sqrt{2}))$ is an attainable
failure probability bound, or equivalently, $1$ is an attainable
multiplicative gap.

We now prove a lower bound on any attainable failure probability bound
which implies a lower bound on the attainable multiplicative gap.
This implies the nonexistence of a pairwise maximal coupling.
\begin{thm}
\label{thm:fpb_bd}If $h$ is a failure probability bound, then for
any $0<s<t$, we have
\begin{align*}
 & \tilde{h}\left(\frac{2}{\sqrt{t}}\right)+\tilde{h}\left(\frac{2}{\sqrt{s}}\right)+2\tilde{h}\left(\frac{1}{\sqrt{s}}\right)\\
 & \ge\mathrm{erfc}\left(\frac{1}{\sqrt{2t}}\right)-\mathrm{erfc}\left(\frac{1}{\sqrt{2s}}\right)-\mathrm{erfc}\left(\frac{1}{4\sqrt{2(t-s)}}\right),
\end{align*}
where $\tilde{h}(x):=h(x)-\mathrm{erf}(x/(2\sqrt{2}))$, and $\mathrm{erfc}(x):=1-\mathrm{erf}(x)$.
Hence $x\mapsto\mathrm{erf}(x/(2\sqrt{2}))$ is not an attainable
failure probability bound. Moreover, $1.0025$ is not an attainable
multiplicative gap.
\end{thm}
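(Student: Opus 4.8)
The route I would take uses only three starting points --- by translation and Brownian scaling take them to be $0,1,2$, matching the distances $1$ and $2$ in the statement --- and the two horizons $0<s<t$. Fix a coupling $\{\{\tilde X_{\alpha,v}\}_{v}\}_{\alpha\in\mathbb{R}}$ realizing a failure probability bound $h$. For a pair $\{\alpha,\beta\}\subseteq\{0,1,2\}$ and a horizon $u\in\{s,t\}$ write $E_{\alpha,\beta}(u):=\{\tilde X_{\alpha,v}=\tilde X_{\beta,v}\text{ for all }v\ge u\}$ for the event that the pair is welded from time $u$ on. Two bounds are available: from the hypothesis, $\mathbf{P}(E_{\alpha,\beta}(u)^{c})\le h(|\alpha-\beta|/\sqrt{u})$; and from the marginals, since $(\tilde X_{\alpha,u},\tilde X_{\beta,u})$ is a coupling of $\mathcal{N}(\alpha,u)$ and $\mathcal{N}(\beta,u)$,
\[
\mathbf{P}(E_{\alpha,\beta}(u))\le\mathbf{P}(\tilde X_{\alpha,u}=\tilde X_{\beta,u})\le 1-d_{\mathrm{TV}}\big(\mathcal{N}(\alpha,u),\mathcal{N}(\beta,u)\big)=\mathrm{erfc}\!\left(\frac{|\alpha-\beta|}{2\sqrt{2u}}\right),
\]
equivalently $\mathbf{P}(E_{\alpha,\beta}(u)^{c})\ge\mathrm{erf}(|\alpha-\beta|/(2\sqrt{2u}))$. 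Since $\tilde h(x)=h(x)-\mathrm{erf}(x/(2\sqrt2))\ge\mathbf{P}(E_{\alpha,\beta}(u)^{c})-\mathrm{erf}(|\alpha-\beta|/(2\sqrt{2u}))$ when $x=|\alpha-\beta|/\sqrt u$, the four terms on the left of the displayed inequality bound $\mathbf{P}(E_{0,2}(t)^{c})+\mathbf{P}(E_{0,2}(s)^{c})+\mathbf{P}(E_{0,1}(s)^{c})+\mathbf{P}(E_{1,2}(s)^{c})$ minus the corresponding $\mathrm{erf}$'s; a short rearrangement then shows the theorem follows once one establishes the \emph{coupling-free} estimate
\[
\mathbf{P}(E_{0,2}(t))+\mathbf{P}(E_{0,2}(s))+\mathbf{P}(E_{0,1}(s))+\mathbf{P}(E_{1,2}(s))\ \le\ 1+2\,\mathrm{erfc}\!\left(\frac{1}{\sqrt{2s}}\right)+2\,\mathrm{erfc}\!\left(\frac{1}{2\sqrt{2s}}\right)-\mathrm{erf}\!\left(\frac{1}{4\sqrt{2(t-s)}}\right).
\]

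For this estimate the first step is set algebra on the weld events. One has $E_{0,2}(s)\subseteq E_{0,2}(t)$; and the three events $E_{0,2}(s),E_{0,1}(s),E_{1,2}(s)$ have all their pairwise intersections equal to the single event $W:=E_{0,1}(s)\cap E_{1,2}(s)$ of ``all three welded from $s$'' (on $E_{0,2}(s)$ the events $E_{0,1}(s)$ and $E_{1,2}(s)$ coincide, each saying $\tilde X_1$ equals the common value of the already-welded outer pair from $s$ on), so by inclusion--exclusion
\[
\mathbf{P}(E_{0,2}(t))+\mathbf{P}(E_{0,2}(s))+\mathbf{P}(E_{0,1}(s))+\mathbf{P}(E_{1,2}(s))=\mathbf{P}(E_{0,2}(t))+\mathbf{P}\big(E_{0,2}(s)\cup E_{0,1}(s)\cup E_{1,2}(s)\big)+2\,\mathbf{P}(W),
\]
and since the middle term is at most $1$ it suffices to bound $\mathbf{P}(E_{0,2}(t))+2\,\mathbf{P}(W)$ by $2\,\mathrm{erfc}(1/\sqrt{2s})+2\,\mathrm{erfc}(1/(2\sqrt{2s}))-\mathrm{erf}(1/(4\sqrt{2(t-s)}))$.

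This last inequality is the crux, and I expect it to be the main obstacle. The single-horizon bounds alone are too weak --- applied at $s$ and at $t$ they exceed the target once $t-s$ is small --- so one must genuinely use that each $\tilde X_\alpha$ is a Brownian motion on the sub-interval $[s,t]$, which is the only place that information enters. I would split according to whether the common welded value at time $s$ on $W$ --- which is simultaneously distributed as a sub-probability measure of each of $\mathcal{N}(0,s),\mathcal{N}(1,s),\mathcal{N}(2,s)$ --- lies within $\tfrac14$ of the midpoint $1$, and split $E_{0,2}(t)$ similarly by the position of $\tilde X_{0,t}$: on the ``near-midpoint'' pieces one uses the Gaussian density bound $\sup_{z}\mathbf{P}(\tilde X_{\alpha,s}\in[z-\tfrac14,z+\tfrac14])=\mathrm{erf}(1/(4\sqrt{2s}))$, and on the ``far'' pieces, welding an outer process to the middle one over $[s,t]$ (resp.\ welding the outer pair over $[s,t]$) forces a Brownian increment over $[s,t]$ to exceed $\tfrac14$ in absolute value --- an event of probability $1-\mathrm{erf}(1/(4\sqrt{2(t-s)}))$ by the increment's marginal law alone. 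This is exactly where the term $\mathrm{erf}(1/(4\sqrt{2(t-s)}))$ and the constant $\tfrac14$ (a quarter of the span of three equally spaced points) come from; the delicate part is arranging the decomposition so as not to lose spurious factors of $2$. Elementary $\mathrm{erf}$ inequalities then close the estimate, and translating back through $\tilde h$ proves the theorem.

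The two corollaries are then arithmetic with $\mathrm{erf}$. For the nonexistence of a pairwise maximal coupling, take $h(x)=\mathrm{erf}(x/(2\sqrt2))$, so $\tilde h\equiv 0$ and the theorem would demand $\mathrm{erfc}(1/\sqrt{2t})-\mathrm{erfc}(1/\sqrt{2s})\le\mathrm{erfc}(1/(4\sqrt{2(t-s)}))$ for all $0<s<t$; but fixing $s$ and taking $t=s+\rho$ with $\rho\downarrow0$, the left side is $\asymp\rho$ (with constant $\tfrac{1}{\sqrt{2\pi}}\,s^{-3/2}e^{-1/(2s)}$) while the right side is $\asymp e^{-1/(32\rho)}$, so the inequality fails for small $\rho$ and no pairwise maximal coupling exists. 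For the quantitative statement, take $h(x)=\mathrm{erf}(\sqrt{c}\,x/(2\sqrt2))$, so $\tilde h(x)=\mathrm{erf}(\sqrt{c}\,x/(2\sqrt2))-\mathrm{erf}(x/(2\sqrt2))$; substituting into the theorem's inequality and minimizing the slack numerically over $(s,t)$ --- the extremal pair has $s$ of order $\tfrac13$ and $t-s$ a small fraction of $s$ --- shows that the inequality is violated when $c=1.0025$, so $1.0025$ is not an attainable multiplicative gap.
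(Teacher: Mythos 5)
The overall plan (reduce to a coupling-free inequality via $\tilde h(x)\ge\mathbf{P}(E^{c})-\mathrm{erf}(x/(2\sqrt2))$, then argue about three points and two horizons) and the final arithmetic for the corollaries are fine, but the reduction breaks at the inclusion--exclusion step, and the resulting ``crux'' inequality is actually false. You bound $\mathbf{P}(E_{0,2}(s)\cup E_{0,1}(s)\cup E_{1,2}(s))\le1$ and reduce to
\[
\mathbf{P}(E_{0,2}(t))+2\,\mathbf{P}(W)\ \le\ 2\,\mathrm{erfc}\!\left(\tfrac{1}{\sqrt{2s}}\right)+2\,\mathrm{erfc}\!\left(\tfrac{1}{2\sqrt{2s}}\right)-\mathrm{erf}\!\left(\tfrac{1}{4\sqrt{2(t-s)}}\right).
\]
But plug in the very parameters the paper uses to make the theorem non-trivial, $s=0.33$, $t=0.3348$: the right-hand side is approximately $2(0.082)+2(0.385)-0.9997\approx-0.07<0$, while the left-hand side is nonnegative. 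So this inequality cannot hold, and no amount of Gaussian-density/Brownian-increment splitting on $W$ and $E_{0,2}(t)$ will prove it. The step $\mathbf{P}(\text{union})\le1$ throws away too much.

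The missing ingredient is precisely the quantity that replaces the coarse total-variation bound $\mathbf{P}(E)\le\mathrm{erfc}$ in the paper's argument. The paper's key lemma (its display \eqref{eq:tail_agree}) says that for any pair $\alpha<\beta$ and any coupling with failure bound $h$,
\[
\mathbf{P}\!\left(X_{\beta,s}\le\tfrac{\alpha+\beta}{2},\ X_{\beta,s}\ne X_{\alpha,s}\right)+\mathbf{P}\!\left(X_{\alpha,s}\ge\tfrac{\alpha+\beta}{2},\ X_{\beta,s}\ne X_{\alpha,s}\right)\ \le\ \tilde h\!\left(\tfrac{|\alpha-\beta|}{\sqrt s}\right).
\]
This is a genuine refinement of the TV bound: it locates where the TV-discrepancy mass must live conditional on not coupling (on the ``right side'' of the shared midpoint, up to error $\tilde h$), rather than only saying how much of it there is. The paper then uses this lemma for the three pairs $(-1,1)$, $(0,1)$, $(-1,0)$ to control $\mathbf{P}(X_{1,s}-X_{-1,s}\le\tfrac12,\ X_{1,s}\ne X_{-1,s})$, and only afterward brings in the Brownian increment over $[s,t]$ (which is where the $\mathrm{erfc}(1/(4\sqrt{2(t-s)}))$ term comes from --- that part of your sketch is in the right spirit). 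Your write-up never derives or uses anything like this pointwise localization; it relies only on $\mathbf{P}(E)\le\mathrm{erfc}$, and after the lossy union bound there is simply not enough information left. To repair the proof you would need to prove and deploy this midpoint lemma (or an equivalent statement about where uncoupled mass sits) before discarding the union event.
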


\begin{proof}
For any $\alpha<\beta$, $s>0$, we have
\begin{align}
 & \mathbf{P}\left(X_{\beta,s}\le(\alpha+\beta)/2\;\mathrm{and}\;X_{\beta,s}\neq X_{\alpha,s}\right)+\mathbf{P}\left(X_{\alpha,s}\ge(\alpha+\beta)/2\;\mathrm{and}\;X_{\beta,s}\neq X_{\alpha,s}\right)\nonumber \\
 & =\mathbf{P}(X_{\beta,s}\le(\alpha+\beta)/2)+\mathbf{P}(X_{\alpha,s}\ge(\alpha+\beta)/2)\nonumber \\
 & \;\;\;-\left(\mathbf{P}\left(X_{\beta,s}\le(\alpha+\beta)/2\;\mathrm{and}\;X_{\beta,s}=X_{\alpha,s}\right)+\mathbf{P}\left(X_{\alpha,s}\ge(\alpha+\beta)/2\;\mathrm{and}\;X_{\beta,s}=X_{\alpha,s}\right)\right)\nonumber \\
 & \le\mathbf{P}(X_{\beta,s}\le(\alpha+\beta)/2)+\mathbf{P}(X_{\alpha,s}\ge(\alpha+\beta)/2)-\mathbf{P}(X_{\beta,s}=X_{\alpha,s})\nonumber \\
 & \stackrel{(a)}{\le}1-\mathrm{erf}\left(\frac{|\alpha-\beta|}{2\sqrt{2s}}\right)-\left(1-h\left(\frac{|\alpha-\beta|}{\sqrt{s}}\right)\right)\nonumber \\
 & =\tilde{h}\left(\frac{|\alpha-\beta|}{\sqrt{s}}\right),\label{eq:tail_agree}
\end{align}
where (a) is because $X_{\alpha,s}\sim\mathrm{N}(\alpha,s)$, $X_{\beta,s}\sim\mathrm{N}(\beta,s)$
and by the definition of the failure probability bound.

Let $0<s<t$. We have 
\begin{align*}
 & \mathbf{P}\left(X_{1,s}-X_{-1,s}\le1/2\;\mathrm{and}\;X_{1,s}\neq X_{-1,s}\right)\\
 & \le\mathbf{P}\left(X_{1,s}\le0\;\mathrm{and}\;X_{1,s}\neq X_{-1,s}\right)+\mathbf{P}\left(X_{-1,s}\ge0\;\mathrm{and}\;X_{1,s}\neq X_{-1,s}\right)\\
 & \;\;\;+\mathbf{P}\left(X_{1,s}\le1/2\;\mathrm{and}\;X_{-1,s}\ge1/2\;\mathrm{and}\;X_{1,s}\neq X_{-1,s}\right)\\
 & \stackrel{(a)}{\le}\mathbf{P}\left(X_{1,s}\le0\;\mathrm{and}\;X_{1,s}\neq X_{-1,s}\right)+\mathbf{P}\left(X_{-1,s}\ge0\;\mathrm{and}\;X_{1,s}\neq X_{-1,s}\right)\\
 & \;\;\;+\mathbf{P}\left(X_{1,s}\le1/2\;\mathrm{and}\;X_{1,s}\neq X_{0,s}\right)+\mathbf{P}\left(X_{-1,s}\ge1/2\;\mathrm{and}\;X_{-1,s}\neq X_{0,s}\right)\\
 & \stackrel{(b)}{\le}\tilde{h}(2/\sqrt{s})+2\tilde{h}(1/\sqrt{s}),
\end{align*}
where (a) is because if $X_{1,s}\neq X_{-1,s}$, then either $X_{1,s}\neq X_{0,s}$
or $X_{-1,s}\neq X_{0,s}$, and (b) is by applying \eqref{eq:tail_agree}
on $(\alpha,\beta,s)\leftarrow(-1,1,s)$, $(0,1,s)$ and $(-1,0,s)$
respectively. Hence,
\begin{align*}
 & \mathbf{P}\left(X_{1,t}=X_{-1,t}\;\mathrm{and}\;X_{1,s}\neq X_{-1,s}\right)\\
 & \le\mathbf{P}\left(X_{1,s}-X_{-1,s}\le1/2\;\mathrm{and}\;X_{1,s}\neq X_{-1,s}\right)\\
 & \;\;+\mathbf{P}(X_{1,t}-X_{1,s}\le-1/4)+\mathbf{P}(X_{-1,t}-X_{-1,s}\ge1/4)\\
 & \le\tilde{h}(2/\sqrt{s})+2\tilde{h}(1/\sqrt{s})+\mathrm{erfc}\left(\frac{1/4}{\sqrt{2(t-s)}}\right).
\end{align*}
Therefore,
\begin{align*}
 & 1-h(2/\sqrt{t})\\
 & \le\mathbf{P}(X_{1,t}=X_{-1,t})\\
 & \le\mathbf{P}(X_{1,s}=X_{-1,s})+\mathbf{P}\left(X_{1,t}=X_{-1,t}\;\mathrm{and}\;X_{1,s}\neq X_{-1,s}\right)\\
 & \le\mathrm{erfc}\left(\frac{2}{2\sqrt{2s}}\right)+\tilde{h}(2/\sqrt{s})+2\tilde{h}(1/\sqrt{s})+\mathrm{erfc}\left(\frac{1/4}{\sqrt{2(t-s)}}\right)\\
 & =\mathrm{erfc}\left(\frac{1}{\sqrt{2s}}\right)+\mathrm{erfc}\left(\frac{1}{4\sqrt{2(t-s)}}\right)+\tilde{h}(2/\sqrt{s})+2\tilde{h}(1/\sqrt{s}).
\end{align*}
Hence,
\begin{align*}
 & \tilde{h}\left(\frac{2}{\sqrt{t}}\right)+\tilde{h}\left(\frac{2}{\sqrt{s}}\right)+2\tilde{h}\left(\frac{1}{\sqrt{s}}\right)\\
 & \ge\mathrm{erfc}\left(\frac{1}{\sqrt{2t}}\right)-\mathrm{erfc}\left(\frac{1}{\sqrt{2s}}\right)-\mathrm{erfc}\left(\frac{1}{4\sqrt{2(t-s)}}\right).
\end{align*}
Note that the above lower bound can be positive (e.g. it is at least
$0.0019$ when $s=0.33$, $t=0.3348$), and thus $x\mapsto\mathrm{erf}(x/(2\sqrt{2}))$
is not an attainable failure probability bound.

It can be verified numerically that $x\mapsto\mathrm{erf}(x\sqrt{c}/(2\sqrt{2}))$
does not satisfy the above inequality when $c=1.0025$, $s=0.2361$,
$t=0.2408$. Hence $1.0025$ is not an attainable multiplicative gap.
\end{proof}
\medskip{}

We conjecture that the dyadic grand coupling is optimal in the following
sense.
\begin{conjecture}
\label{conj:optimal}If $h$ is a failure probability bound, then
for any $\psi>0$, we have
\[
h(\psi)\ge\int_{\psi/2}^{\infty}\left(\sum_{k=1}^{\infty}2(-1)^{k+1}\exp\left(-\frac{k^{2}\pi^{2}}{2\zeta^{2}}\right)\right)\frac{\zeta^{-2}\psi}{\ln2}\left(\min\left\{ \zeta\psi^{-1},\,1\right\} -\frac{1}{2}\right)\mathrm{d}\zeta,
\]
i.e., the attainable failure probability bound given in Theorem \ref{thm:b_dyadic}
is pointwise optimal.
\end{conjecture}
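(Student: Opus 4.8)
The plan is to recast Conjecture \ref{conj:optimal} as a pointwise lower bound on the pair failure probability of an \emph{arbitrary} grand coupling. Write $\Phi(\psi)$ for the integral appearing in Theorem \ref{thm:b_dyadic} and Conjecture \ref{conj:optimal}; by Theorem \ref{thm:b_dyadic} it equals $\mathbf{P}(\exists\,t\ge s:X_{\alpha,t}\neq X_{\beta,t})$ for the dyadic grand coupling at any representative with $|\alpha-\beta|/\sqrt{s}=\psi$. If $h$ is a failure probability bound witnessed by a coupling $\{\{\tilde X_{\alpha,t}\}_{t\ge0}\}_{\alpha\in\mathbb{R}}$, then $h(\psi)\ge\mathbf{P}(\exists\,t\ge s:\tilde X_{\alpha,t}\neq\tilde X_{\beta,t})$ for every representative with that $\psi$; conversely the failure probability of any single coupling is itself a failure probability bound. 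Hence the conjecture is equivalent to the assertion that for every grand coupling and every $\psi>0$, some representative $(\alpha,\beta,s)$ with $|\alpha-\beta|/\sqrt{s}=\psi$ has pair failure probability at least $\Phi(\psi)$. One would hope to prove a bound depending on $\psi$ only (as is the case for the dyadic coupling), so that the representative is immaterial; for concreteness fix $\alpha=0$, $\beta=\psi$, $s=1$. Since, as in the proof of Theorem \ref{thm:b_dyadic}, $\Phi(\psi)=\mathbf{E}[K(Z)]$ with $K(\zeta):=\sum_{k\ge1}2(-1)^{k+1}\exp(-k^{2}\pi^{2}/(2\zeta^{2}))$ the Kolmogorov-type function and $Z$ the randomly shifted dyadic coalescence scale at the pair's coupling time, the target is to produce, inside the arbitrary coupling, a ``coalescence scale'' process that is stochastically no larger than the $\mathrm{BES}^{3}(0)$ process $Y$ driving the dyadic construction, and then read off the failure probability from the Kolmogorov law of $(\pi/2)/\sup_{t\le1}Y_{t}$.

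Concretely I would proceed as follows. (i) Insert the dyadic mesh points $\psi k2^{-j}$ ($j\ge0$, $0\le k\le2^{j}$) of $[0,\psi]$ as auxiliary points, used exactly as the single intermediate point $X_{0,s}$ is used in the proof of Theorem \ref{thm:fpb_bd}. (ii) At each dyadic spacing $\psi2^{-j}$ and each time, invoke the midpoint inequality $\mathbf{P}(\tilde X_{\beta,s}\le(\alpha+\beta)/2,\,\tilde X_{\beta,s}\neq\tilde X_{\alpha,s})+\mathbf{P}(\tilde X_{\alpha,s}\ge(\alpha+\beta)/2,\,\tilde X_{\beta,s}\neq\tilde X_{\alpha,s})\le\tilde h(|\alpha-\beta|/\sqrt{s})$ from the proof of Theorem \ref{thm:fpb_bd}, now read as a lower bound on disagreement forced by the Gaussian marginal mass that crosses a midpoint. (iii) For each $t$, track the finest dyadic spacing at which two inserted points still fail to agree at some time $\ge t$; combining the midpoint inequalities with the Markovian Gaussian increments of each coordinate process $\tilde X_{\gamma,\cdot}$ should show that advancing this ``coalescence scale'' by one dyadic level requires at least as much displacement as the corresponding step of $Y$, because the Brownian marginals (applied at all dyadic scales at once) obstruct pushing the coalescence width up faster than a Brownian motion conditioned to stay positive --- the identity used in Section \ref{sec:dyadic}. (iv) Conclude that the coalescence scale is stochastically dominated by $\sup_{t\le1}Y_{t}$, hence that the pair failure probability is at least $\mathbf{E}[K(Z)]=\Phi(\psi)$. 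As preliminary milestones one could first settle the regimes $\psi\to0$ (there $\Phi(\psi)$ agrees to first order with the trivial reflection bound $\mathrm{erf}(\psi/(2\sqrt{2}))$, so only the second-order correction is at stake) and $\psi\to\infty$ (a quantitatively sharp ``far-apart Brownian motions cannot be coupled quickly'').

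The hard part --- and the reason this remains only a conjecture --- is step (iii). A general grand coupling is far less rigid than the dyadic one: it need not be monotone in the starting point, the merging of equally spaced pairs need not be nested across scales or happen simultaneously, and the increments of $\tilde X_{\gamma,\cdot}$ for different $\gamma$ may be correlated in essentially arbitrary ways, so it is unclear that the marginal constraints (each coordinate process is a Brownian motion) together with the purely logical constraint (if two values differ, one of them differs from any third value) suffice to force the coalescence scale to grow no faster than the $\mathrm{BES}^{3}$ scale. Making (iii) rigorous appears to require either a monotone-rearrangement lemma --- that replacing a grand coupling by a monotone one cannot decrease any pairwise failure probability, reducing the problem to monotone couplings, which would then still need a separate optimality argument --- or, equivalently in linear-programming terms, a \emph{continuum} dual certificate: a signed combination of the marginal and logical constraints, indexed by all dyadic scales and a continuum of times, whose aggregate value is precisely $\Phi(\psi)$. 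The three-term bound in the proof of Theorem \ref{thm:fpb_bd} is a crude low-order truncation of such a family; the obstacle is to choose the weights so that the telescoping across all scales and times closes exactly at $\Phi(\psi)$ and the resulting certificate is nonnegative uniformly in $\psi$.
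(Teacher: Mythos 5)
This statement is posed as a \emph{conjecture} in the paper; no proof is given there, so there is no paper argument to compare yours against. You correctly recognize this: your submission is an outline of a strategy, not a proof, and you yourself flag step~(iii) as the missing ingredient. That self-assessment is accurate, and I want to reinforce exactly where and why the gap is fatal as written.

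Your reformulation (that the conjecture is equivalent to showing every grand coupling has, for each $\psi$, a representative $(\alpha,\beta,s)$ with pair failure probability at least $\Phi(\psi)$) is essentially right, modulo replacing ``some representative'' by a supremum, and your instinct to import the midpoint inequality from the proof of Theorem~\ref{thm:fpb_bd} and iterate it across dyadic scales is a natural next move. But step~(iii) is precisely where the argument stops being a proof and becomes a hope. The midpoint inequality \eqref{eq:tail_agree} is a one-shot, fixed-time constraint relating disagreement of a pair to Gaussian tail mass; you would need a version that aggregates across \emph{all} dyadic levels and a \emph{continuum} of times and telescopes to the Kolmogorov integral $\Phi(\psi)=\mathbf{E}[K(Z)]$. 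Nothing you wrote shows that the family of constraints you enumerate is sufficient to pin the ``coalescence scale'' below $\sup_{t\le 1} Y_t$ for an arbitrary coupling: a general coupling is not monotone in the starting point, coalescences of nested dyadic pairs need not be nested or simultaneous, and a pair can decouple and recouple, so there is no canonical monotone ``scale'' process to compare with $Y$. The two patches you name (a monotone-rearrangement reduction, or an explicit LP dual certificate) are both plausible programs, but each is an open problem in its own right --- the rearrangement lemma is not proved and is not obviously true (rearranging can change joint laws of triples, which is exactly what the midpoint inequality tracks), and the dual certificate is only asserted to exist, with the three-term bound of Theorem~\ref{thm:fpb_bd} offered as a ``low-order truncation'' without any indication that the truncation error can be driven to zero while keeping the certificate nonnegative. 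So the proposal is a reasonable research plan consistent with the paper's framing, but it does not prove the conjecture, and the paper does not either.
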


\smallskip{}

Loosely speaking, the dyadic grand coupling is ``locally a reflection
coupling'', in the sense that the coupled processes after the time
of each coalescence point can be obtained by performing the reflection
coupling between adjacent pairs of coalescence points (see Figure
\ref{fig:sample}). In Conjecture \ref{conj:optimal}, we raise the
question whether such ``locally optimal'' coupling is globally optimal.

It may also be of interest to find the smallest attainable multiplicative
gap. Theorem \ref{thm:fpb_bd} and the numerical evidence in Figure
\ref{fig:bdr} show that the infimum of the set of attainable multiplicative
gaps is between $1.0025$ and $1.5$.\smallskip{}

\section{Conclusions and Discussion}

We constructed a coupling of $\{\mathrm{BM}(\alpha)\}_{\alpha\in\mathbb{R}}$,
such that the coupling for any pair of the coupled processes is close
to being maximal. While it is shown that a pairwise exactly maximal
coupling does not exist, we conjecture that our coupling is optimal
among couplings of $\{\mathrm{BM}(\alpha)\}_{\alpha\in\mathbb{R}}$
in the sense of attainable failure probability bounds.

One future direction is to generalize the construction to Brownian
motions in $\mathbb{R}^{n}$. While we can couple each coordinate
independently using the dyadic grand coupling, this may not be the
optimal construction.

Another direction is to consider Brownian motions with initial distributions
(rather than fixed starting points), i.e., the collection of processes
is $\{\mathrm{BM}(P)\}_{P\in\mathcal{P}(\mathbb{R})}$, where $\mathcal{P}(\mathbb{R})$
is the set of distributions over $\mathbb{R}$, and $\mathrm{BM}(P)$
is the Brownian motion with initial distribution $P$. One simple
construction is to first couple the starting point by the quantile
coupling, then apply the dyadic grand coupling, i.e., $X_{P,0}:=F_{P}^{-1}(U)$,
where $U\sim\mathrm{Unif}[0,1]$, and $X_{P,t}:=X_{X_{P,0},t}$ for
$t>0$, where $X_{X_{P,0},t}$ is given by the dyadic grand coupling
with starting point $X_{P,0}$. Another construction is to use the
sequential Poisson functional representation \cite{li2019pairwise}
instead of the quantile coupling, since it is more suitable for minimizing
concave costs (it is shown in Appendix \ref{sec:pf_concave} that
the probability of failure in Theorem \ref{thm:b_dyadic} is concave
in $|\alpha-\beta|$).

\smallskip{}

\section{Acknowledgements}

The authors acknowledge support from the NSF grants CNS-1527846, CCF-1618145,
CCF-1901004, the NSF Science \& Technology Center grant CCF-0939370
(Science of Information), and the William and Flora Hewlett Foundation
supported Center for Long Term Cybersecurity at Berkeley.

\medskip{}

\appendix
\[
\]

\section{Proof of the Claim in Definition \ref{def:dyadic}\label{sec:pf_dyadic}}

We first prove that for any $\theta\in[0,1]$, $\alpha\in\mathbb{R}$,
we have $G_{\theta,\alpha,j}=W_{j}$ for all sufficiently large $j$,
as long as $\sup\{j:W_{j}=1\}=\sup\{j:W_{j}=-1\}=\infty$. Let $k_{1}\in\mathbb{Z}$
be such that $2^{k_{1}}>4|\alpha|$ and $W_{k_{1}}=1$, and $k_{-1}\in\mathbb{Z}$
be such that $2^{k_{-1}}>4|\alpha|$ and $W_{k_{-1}}=-1$. Assume
$j\ge\max\{k_{1},k_{-1}\}$. We have
\begin{align*}
 & \alpha-\sum_{k=-\infty}^{j-1}W_{k}2^{k+\theta-1}\\
 & \le\alpha+\sum_{k\le j-1,\,k\neq k_{1}}2^{k+\theta-1}-2^{k_{1}+\theta-1}\\
 & =\alpha+2^{j+\theta-1}-2^{k_{1}+\theta}\\
 & <2^{j+\theta-1},
\end{align*}
where the last inequality is by the definition of $k_{1}$. Similarly,
$\alpha-\sum_{k=-\infty}^{j-1}W_{k}2^{k+\theta-1}>-2^{j+\theta-1}$.
Hence,
\[
\left(\alpha-\sum_{k=-\infty}^{j-1}W_{k}2^{k+\theta-1}\!+2^{j+\theta-1}\right)\mathrm{mod}\;2^{j+\theta+1}\in[0,2^{j+\theta}),
\]
and $G_{\theta,\alpha,j}=W_{j}$.

We then prove \eqref{eq:dyadic_partial}. We will prove by induction
that for all $j\in\mathbb{Z}$,
\begin{align*}
 & \left\lfloor 2^{-(j+\theta)}\left(\alpha-\sum_{k=-\infty}^{j-1}W_{k}2^{k+\theta-1}\!+2^{j+\theta-1}\right)\right\rfloor \\
 & =2^{-(j+\theta)}\sum_{k=j}^{\infty}(W_{k}-G_{\theta,\alpha,k})2^{k+\theta-1}.
\end{align*}
If $j\ge\max\{k_{1},k_{-1}\}$, then $-2^{j+\theta-1}<\alpha-\sum_{k=-\infty}^{j-1}W_{k}2^{k+\theta-1}<2^{j+\theta-1}$,
and also $W_{k}=G_{\theta,\alpha,k}$ for $k\ge j$, and thus both
sides in the induction hypothesis are $0$.

Assume the induction hypothesis is true for $j+1$. If $W_{j}=G_{\theta,\alpha,j}$,
then
\[
\left(\alpha-\sum_{k=-\infty}^{j-1}W_{k}2^{k+\theta-1}\!+2^{j+\theta-1}\right)\mathrm{mod}\;2^{j+\theta+1}\in[0,2^{j+\theta}),
\]
and hence
\begin{align*}
 & \left\lfloor 2^{-(j+\theta)}\left(\alpha-\sum_{k=-\infty}^{j-1}W_{k}2^{k+\theta-1}\!+2^{j+\theta-1}\right)\right\rfloor \\
 & \stackrel{(a)}{=}2\left\lfloor 2^{-(j+\theta+1)}\left(\alpha-\sum_{k=-\infty}^{j-1}W_{k}2^{k+\theta-1}\!+2^{j+\theta-1}+(1-W_{j})2^{j+\theta-1}\right)\right\rfloor \\
 & =2\left\lfloor 2^{-(j+\theta+1)}\left(\alpha-\sum_{k=-\infty}^{j}W_{k}2^{k+\theta-1}\!+2^{j+\theta}\right)\right\rfloor \\
 & \stackrel{(b)}{=}2\cdot2^{-(j+1+\theta)}\sum_{k=j+1}^{\infty}(W_{k}-G_{\theta,\alpha,k})2^{k+\theta-1}\\
 & =2^{-(j+\theta)}\sum_{k=j}^{\infty}(W_{k}-G_{\theta,\alpha,k})2^{k+\theta-1},
\end{align*}
where (a) is because $(1-W_{j})2^{j+\theta-1}\in[0,2^{j+\theta}]$,
and (b) is by the induction hypothesis for $j+1$. Therefore the induction
hypothesis holds for $j$. If $W_{j}=-G_{\theta,\alpha,j}$, then
\[
\left(\alpha-\sum_{k=-\infty}^{j-1}W_{k}2^{k+\theta-1}\!+2^{j+\theta-1}\right)\mathrm{mod}\;2^{j+\theta+1}\in[2^{j+\theta},2^{j+\theta+1}),
\]
and hence
\begin{align*}
 & \left\lfloor 2^{-(j+\theta)}\left(\alpha-\sum_{k=-\infty}^{j-1}W_{k}2^{k+\theta-1}\!+2^{j+\theta-1}\right)\right\rfloor \\
 & \stackrel{(a)}{=}2\left\lfloor 2^{-(j+\theta+1)}\left(\alpha-\sum_{k=-\infty}^{j-1}W_{k}2^{k+\theta-1}\!+2^{j+\theta-1}+(1-W_{j})2^{j+\theta-1}\right)\right\rfloor +W_{j}\\
 & =2\left\lfloor 2^{-(j+\theta+1)}\left(\alpha-\sum_{k=-\infty}^{j}W_{k}2^{k+\theta-1}\!+2^{j+\theta}\right)\right\rfloor +W_{j}\\
 & =2\cdot2^{-(j+1+\theta)}\sum_{k=j+1}^{\infty}(W_{k}-G_{\theta,\alpha,k})2^{k+\theta-1}+W_{j}\\
 & =2^{-(j+\theta)}\sum_{k=j}^{\infty}(W_{k}-G_{\theta,\alpha,k})2^{k+\theta-1},
\end{align*}
where (a) can be deduced by considering whether $W_{j}=1$ or $-1$.
Therefore the induction hypothesis holds for $j$.

Hence the induction hypothesis holds for all $j\in\mathbb{Z}$, and
\[
\sum_{k=j}^{\infty}(W_{k}-G_{\theta,\alpha,k})2^{k+\theta-1}\!-2^{j+\theta-1}\le\alpha-\!\sum_{k=-\infty}^{j-1}W_{k}2^{k+\theta-1}\!<\sum_{k=j}^{\infty}(W_{k}-G_{\theta,\alpha,k})2^{k+\theta-1}\!+2^{j+\theta-1}.
\]
Letting $j\to-\infty$, we have $\sum_{j=-\infty}^{\infty}(W_{j}-G_{\theta,\alpha,j})2^{j+\theta-1}=\alpha$.

\[
\]

\section{Proof that the Expression in Theorem \ref{thm:b_dyadic} is concave
in $|\alpha-\beta|$\label{sec:pf_concave}}

Let $h(\psi):=\mathbf{P}(\exists\,\gamma_{1},\gamma_{2}\in[0,\psi],t\ge1\,\mathrm{s.t.}\,X_{\gamma_{1},t}\neq X_{\gamma_{2},t})$
be given in Theorem \ref{thm:b_dyadic}. Let $0<\psi_{1}<\psi_{2}$.
Fix $l>\psi_{2}$. Let $(A_{1},B_{1}),(A_{2},B_{2}),\ldots,(A_{N},B_{N})\subseteq[0,l]$
be maximal open intervals with length at least $\psi_{1}$, sorted
in ascending order, such that $\{X_{\gamma,t}\}_{t\ge1}$ is constant
within each interval (i.e., for any $i=1,\ldots,N$, we have $B_{i}-A_{i}\ge\psi_{1}$,
$\{X_{\gamma_{1},t}\}_{t\ge1}=\{X_{\gamma_{2},t}\}_{t\ge1}$ for any
$\gamma_{1},\gamma_{2}\in(A_{i},B_{i})$, and any open interval in
$[0,l]$ that is a proper superset of $(A_{i},B_{i})$ does not have
this property). For any $\rho\in[0,1]$, let $\psi:=\rho\psi_{1}+(1-\rho)\psi_{2}$.
We have
\begin{align*}
h(\psi) & =\frac{1}{l-\psi}\int_{0}^{l-\psi}\mathbf{P}(\exists\,\gamma_{1},\gamma_{2}\in[x,x+\psi],t\ge1\,\mathrm{s.t.}\,X_{\gamma_{1},t}\neq X_{\gamma_{2},t})\mathrm{d}x\\
 & =1-\frac{1}{l-\psi}\mathbf{E}\left[\sum_{i=1}^{N}\max\{B_{i}-A_{i}-\psi,0\}\right]\\
 & \stackrel{(a)}{\ge}1-\frac{1}{l-\psi}\mathbf{E}\left[\sum_{i=1}^{N}\left(\rho\max\{B_{i}-A_{i}-\psi_{1},0\}+(1-\rho)\max\{B_{i}-A_{i}-\psi_{2},0\}\right)\right]\\
 & =1-\frac{1}{l-\psi}\left(\rho\mathbf{E}\left[\sum_{i=1}^{N}\max\{B_{i}-A_{i}-\psi_{1},0\}\right]+(1-\rho)\mathbf{E}\left[\sum_{i=1}^{N}\max\{B_{i}-A_{i}-\psi_{2},0\}\right]\right)\\
 & =1-\frac{1}{l-\psi}\left(\rho(l-\psi_{1})(1-h(\psi_{1}))+(1-\rho)(l-\psi_{2})(1-h(\psi_{2}))\right),
\end{align*}
where (a) is by the convexity of $x\mapsto\max\{\gamma-x,0\}$. Letting
$l\to\infty$, we have $h(\psi)\ge\rho h(\psi_{1})+(1-\rho)h(\psi_{2})$.
Hence $h$ is concave on $(0,\infty)$. Since $h$ is non-decreasing,
$h$ is concave on $[0,\infty)$.

\medskip{}

\[
\]

\bibliographystyle{IEEEtran}
\bibliography{ref}

\end{document}